\newcommand{\vertleq}{\rotatebox{270}{$\leq$}}
\theoremstyle{remark}
\newtheorem*{note}{Note}
\title{Unknotting via null-homologous twists and multi-twists}
\author[Allen]{Samantha Allen}
\address{Duquesne University, Pittsburgh, PA, USA}
\email{\href{mailto:allens6@duq.edu}{allens6@duq.edu}}
\urladdr{\url{https://samantha-allen.github.io/}}
\author[\.{I}nce]{Kenan \.{I}nce}
\address{Westminster College, Salt Lake City, Utah, USA}
\email{\href{mailto:kince@westminstercollege.edu}{kince@westminstercollege.edu}}
\urladdr{\url{https://cs.westminstercollege.edu/~kince}}
\author[Kim]{Seungwon Kim}
\address{Sungkyunkwan University}
\email{\href{mailto:seungwon.kim@skku.edu}{seungwon.kim@skku.edu}}
\urladdr{\url{https://sites.google.com/view/seungwonkim}}
\author[Ruppik]{Benjamin Matthias Ruppik}
\address{Heinrich-Heine-Universit\"{a}t D\"{u}sseldorf}
\email{\href{mailto:benjamin.ruppik@hhu.de}{benjamin.ruppik@hhu.de}}
\urladdr{\url{https://ben300694.github.io}}
\author[Turner]{Hannah Turner}
\address{Georgia Institute of Technology}
\email{\href{mailto:hannah.turner@math.gatech.edu}{hannah.turner@math.gatech.edu}}
\urladdr{\url{https://sites.google.com/view/hturner/}}
\keywords{
	4-manifolds,
	surgery diagram,
	unknotting operation,
	untwisting number.
}
\def\subjclassname{\textup{2020} Mathematics Subject Classification}
\let\csname subjclassname@1991\endcsname=\subjclassname
\let\csname subjclassname@2000\endcsname=\subjclassname
\subjclass{
    57K40, 
    57K10. 
    \hfill
}
\begin{document}
\begin{abstract}
    The \textit{untwisting number} of a knot $K$ is the minimum number of \textit{null-homologous twists} required to convert $K$ to the unknot.  Such a twist can be viewed as a generalization of a crossing change, since a classical crossing change can be effected by a null-homologous twist on 2 strands.  While the unknotting number gives an upper bound on the smooth 4-genus, the untwisting number gives an upper bound on the topological 4-genus.
    The \textit{surgery description number}, which allows multiple null-homologous twists in a single twisting region to count as one operation, lies between the topological 4-genus and the untwisting number.
    We show that the untwisting and surgery description numbers are different for infinitely many knots, though we also find that the untwisting number is at most twice the surgery description number plus $1$.
\end{abstract}

\maketitle

\section{Introduction}

Given two knot or link diagrams $D_1,D_2$ of links $L_1,L_2$ which differ only inside small disks $\Delta\subset D_1,\Delta^\prime\subset D_2$ containing at least one crossing, a \textit{local move on $L_1$} is the act of replacing $\Delta$ with $\Delta^\prime$, hence converting $D_1$ to $D_2$. An \textit{unknotting operation} is a local move such that, for any diagram $D$ of a knot $K$, we may transform $D$ into a diagram of the unknot via a finite sequence of these local moves. A natural question in knot theory is: given an unknotting operation and a knot $K$, how many such operations are needed to turn $K$ into the unknot? The most common such unknotting operation is a crossing change, which gives rise to the unknotting number $u(K)$. We study additional unknotting operations below which are organized in Table \ref{table:definitions_of_invariants}.

The unknotting number is quite simple to define but has proven
difficult to compute. Fifty years ago, Milnor conjectured that the
unknotting number for the $(p,q)$-torus knot was $(p-1)(q-1)/2$. It was not until 1993 that Kronheimer and Mrowka proved this conjecture in two celebrated papers \cite{kronheimer_gauge_1993,kronheimer_gauge_1995}.  

In \cite{mathieu_chirurgies_1988-1}, Mathieu and Domergue defined
another generalization of unknotting number. In \cite{livingston_slicing_2002},
Livingston worked with this definition. He described it as follows: 
\begin{quotation}
``One can think of performing a crossing change as grabbing two parallel
strands of a knot with opposite orientation and giving them one full
twist. More generally, one can grab $2k$ parallel strands of $K$
with $k$ of the strands oriented in each direction and give them
one full twist.''
\end{quotation}
We call such a twist a \emph{null-homologous twist}. It is described in \cite{ince_untwisting_2016} how a crossing change may be encoded as a $\pm1$-surgery on a null-homologous unknot
$U\subset S^{3}-K$ bounding a disk $D$ such that $D\cap K=2$ points.
From this perspective, a generalized crossing change is a relaxing
of the previous definition to allow $D\cap K=2k$ points for any $k$,
provided $\text{lk}(K,U)=0$ (see Fig.\ \ref{nulltwist}).
In particular, null-homologous twists are an unknotting operation.

One may then naturally define the \emph{untwisting number} $\tu(K)$
to be the minimum length, taken over all diagrams of $K$, of a sequence
of generalized crossing changes beginning at $K$ and resulting in
the unknot. This has been the subject of much research in recent years \cite{MR4171377,ince_untwisting_2016,inceUntwistingInformationHeegaard2017a,livingston_null-homologous_2019-1,mccoy2021gaps,mccoy2019nullhomologous}.

\begin{figure}[ht]
    \begin{tikzpicture}
    	\node[anchor=south west,inner sep=0] (image) at (0,0) {\includegraphics[]{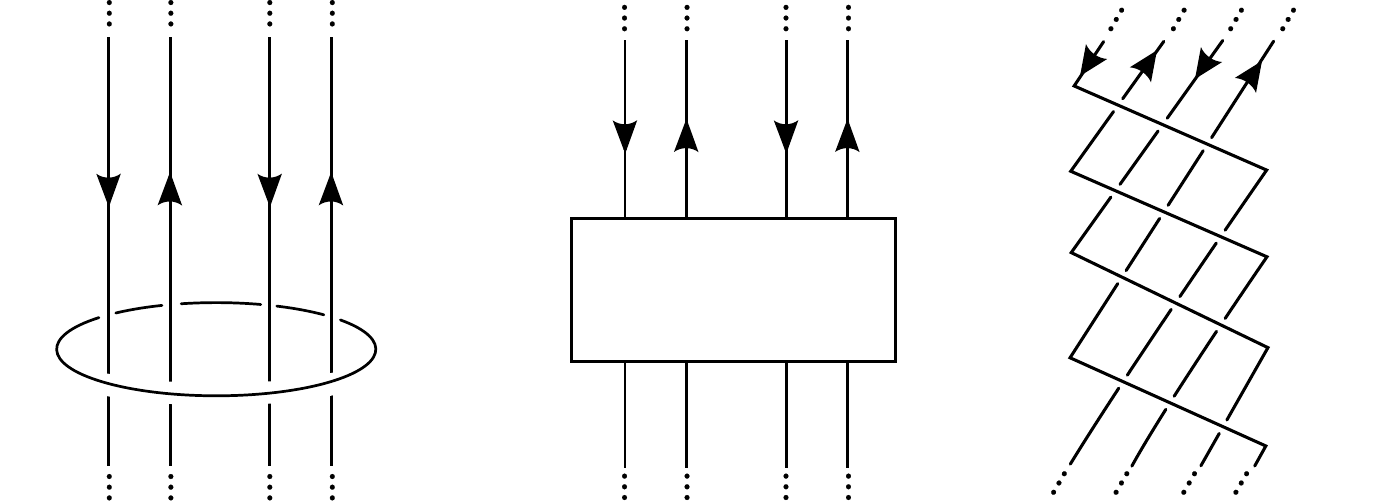}};
    	\begin{scope}[x={(image.south east)},y={(image.north west)}]
            \node at (0,0.3) {\large{$+1$}};
            \node at (0.53, 0.41){ \large{$1$ \text{full LH twist}}};
            \node at (.33,0.5) {\large{$=$}};
            \node at (0.71,0.5) {\large{$=$}};
          
        \end{scope}
    \end{tikzpicture}

    \protect\caption{\label{nulltwist}A left-handed null-homologous twist on 4 strands. 
    }
\end{figure}

There are many variations of the unknotting number and untwisting number, some of which we collect in \Cref{table:definitions_of_invariants}.
One variant we will study, due to Nakanishi \cite{nakanishi2005} (and called the ``surgical description number'' in that paper), is what we and many other authors call the \emph{surgery description number} $\sd(K)$ of a knot. Again we consider null-homologous twists but now allow any number of full twists to be added in the twisting region; we may call this a null-homologous $m$-twist for $m\in\mathbb{Z}$ to specify the number of twists (with sign) being effected. Then $\sd(K)$ is the minimal number of $m$-twists necessary to unknot $K$.
(Here, the value of $m$ may change from move to move.)   

Another natural variant (due to Murakami \cite{murakami_algebraic_1990}) is the \emph{algebraic unknotting number} $u_{a}(K)$, the minimum number of crossing changes necessary to turn a given knot into an Alexander polynomial-one knot. 
Freedman \cite{freedman1982} showed that knots with Alexander polynomial equal to one are topologically slice (in other words, with topological 4-genus $g_4^{\topological} = 0$); topologically slice knots are indistinguishable from the unknot by \textit{classical invariants}, or knot invariants derived from the Seifert matrix. We consider the similarly defined \emph{algebraic untwisting number} $\tu_a(K)$ and \emph{algebraic surgery description number} $\sd_a(K)$, measuring the number of null-homologous twists or $m$-twists, respectively, needed to obtain a knot with Alexander polynomial $1$, as well. 

A tight classical upper bound on the topological $4$-genus $g_4^{\topological}$ of a knot is the \textit{algebraic genus} $\alggenus$  defined in \cite{feller_classical_2019}. 
Distinguishing the algebraic genus from other upper bounds on $g_{4}^{\topological}$, such as the algebraic unknotting number, is often achieved by using the bound $\alggenus\leq g_3$, where $g_{3}(K)$ is the $3$-genus of $K$.  In Section \ref{g_alg<sd_a}, we provide the first (to our knowledge) known infinite family of knots $L_n$ for which $\alggenus(L_n) < u_a(L_n)$ for all $n\in\mathbb{N}$, and since the $3$-genus of our examples is large, we do so without using $g_3$. 

The untwisting number connects to recent work of Manolescu and Piccirillo \cite{manolescu_zero_2021} on candidates for exotic definite $4$-manifolds, which uses the concept of \textit{strong $H$-sliceness} in definite connected sums of $\pm\mathbb{CP}^{2}$. (See Section \ref{g_alg<sd_a} for a related definition.)
It follows from Proposition $4.1$ of \cite{inceUntwistingInformationHeegaard2017a} that, if $K$ can be unknotted using $n$ positive (respectively, negative) nullhomologous twists, then $K$ is strongly topologically $H$-slice in $X:= B^{4}\#^{n}\mp\mathbb{CP}^{2}\cong\#^n\mp\mathbb{CP}^{2}$.
We use this fact to obstruct knots from having $\sd_a=1$ in Section \ref{g_alg<sd_a}.

\subsection*{Results}

Our main results involve various relationships between the untwisting number and the surgery description number.  To start, we give the first known examples (to the authors' knowledge) such that $\sd \neq \tu$. See Section \ref{sec:sd-tu} for a description of these knots.
\begin{theorem}
    \label{thm:sd != tu} There are infinitely many knots $\{K_n\}$ with $\sd(K_n)=1$ and $\tu(K_n)=2$.
\end{theorem}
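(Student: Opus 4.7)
The plan is to construct an explicit infinite family $\{K_n\}$ realizing the claim, and to verify the three statements $\sd(K_n) \leq 1$, $\tu(K_n) \leq 2$, and $\tu(K_n) \geq 2$ separately. Each $K_n$ will be produced as the image of the unknot under a single null-homologous $m$-twist on a disk $D_n$ meeting $K_n$ transversally in $2k_n$ points with $k_n$ strands of each orientation. By definition, undoing this single twist returns $K_n$ to the unknot, so $\sd(K_n) \leq 1$ is immediate, and nontriviality of $K_n$ (checked via any nontrivial classical invariant) upgrades this to $\sd(K_n) = 1$.

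For the upper bound $\tu(K_n) \leq 2$, the strategy is to realize the effect of the single $m$-twist as the composition of two null-homologous $\pm 1$-twists on appropriately chosen sub-configurations of the strands. Concretely, one isotopes $K_n$ so that a first null-homologous $\pm 1$-twist on a sub-bundle of the $2k_n$ strands reduces $K_n$ to a knot that can then be finished off by a single further $\pm 1$-twist. For carefully chosen parameters $m$ and $k_n$, this decomposition should be visible directly in a standard diagram for $K_n$.

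The main obstacle is the lower bound $\tu(K_n) \geq 2$, i.e., ruling out $\tu(K_n) = 1$ for every $n$. Since $\tu_a(K_n) \leq \tu(K_n)$, it suffices to show $\tu_a(K_n) \geq 2$, meaning that $K_n$ cannot be transformed into a knot with Alexander polynomial one by any single null-homologous $\pm 1$-twist. Natural candidate obstructions are invariants that change in a controlled way under a single null-homologous twist: Tristram-Levine signatures, the Blanchfield form, or — via the strong topological $H$-sliceness interpretation recalled in the introduction, which links $\tu(K)=1$ to $K$ being strongly topologically $H$-slice in a single $\pm \mathbb{CP}^2$ — Heegaard Floer $d$-invariants of the branched double cover $\Sigma_2(K_n)$, used simultaneously to obstruct $H$-sliceness in both $\mathbb{CP}^2$ and $\overline{\mathbb{CP}^2}$.

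Finally, infinitude of the family is arranged by letting $n$ parametrize either $m$ or the number of strands $2k_n$, chosen so that the obstruction of the previous paragraph grows without bound (ensuring $\tu(K_n) \geq 2$ for all $n$) while the construction continues to yield $\sd(K_n) = 1$. Pairwise distinctness of the $K_n$ will then follow from a classical invariant that separates them, such as the Alexander polynomial, Seifert genus, or signature.
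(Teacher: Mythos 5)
Your outline has the same skeleton as the paper's argument (build $K_n$ from the unknot by one null-homologous $m$-twist to get $\sd\leq 1$, split that multi-twist into single twists for $\tu\leq 2$, and obstruct $\tu_a=1$ to get $\tu\geq 2$), but as written it is a plan rather than a proof: no family is actually constructed and no obstruction is actually verified. The entire difficulty of the theorem lives in the third step, and there you only list candidate tools (Tristram--Levine signatures, the Blanchfield form, $d$-invariants) without checking that any of them succeeds on any concrete infinite family that simultaneously has $\sd=1$. The paper resolves this by taking $K_n=P(10n+3,1,3)$, observing these are the two-bridge knots with fraction $(40n+15)/4$ so that $\Sigma_2(K_n)\cong L(40n+15,4)$, and applying the Borodzik--Friedl linking-form obstruction to $u_a=1$: the condition forces $\pm 2$ to be a square mod $40n+15$, hence mod $5$, which fails. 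That is a genuinely finite, checkable computation; your proposal contains no analogue of it.

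Two further points where your plan would need repair. First, the sentence about choosing parameters so that ``the obstruction grows without bound'' conflates two different goals: to prove $\tu(K_n)\geq 2$ you only need the obstruction to $\tu_a=1$ to hold for each $n$, not to grow; growth would be relevant for separating $\tu$ and $\sd$ by more than $1$, which is an open question the paper explicitly poses. Second, several of your candidate obstructions are in tension with $\sd(K_n)=1$: a single $m$-twist unknotting $K_n$ already forces $K_n$ to be topologically $H$-slice in $\#_{|m|}\mp\mathbb{CP}^2$, so its signature function is confined to one sign; any obstruction to $\tu=1$ phrased as ``the signature takes both signs'' would therefore also kill $\sd=1$. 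A signature argument could still work via $\abs{\sigma_K(\omega)}=4$, but you would have to exhibit such knots with $\sd=1$; and the paper itself notes that the Heegaard Floer route appears feasible only for individual knots, not infinite families. You need to commit to a family and carry out one obstruction to completion.
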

\noindent This, of course, leads to questions about how far apart the surgery description number and the untwisting number can be.  

\begin{question}
     Can $\tu$ and $\sd$ be arbitrarily far apart?
\end{question}

Answering such a question is made more difficult by the close relationships between $\tu$ and $\sd$ both in definition and in values, demonstrated by the following two results.
\begin{theorem}
    \label{thm:algebraic}
    Let $K\subset S^3$ be a knot.
    Then $\sd_a(K) \leq \tu_a(K) \leq 2\sd_a(K)$.
\end{theorem}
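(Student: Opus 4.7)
The first inequality $\sd_a(K) \leq \tu_a(K)$ is immediate from the definitions. A null-homologous twist is, by definition, a null-homologous $m$-twist with $m = \pm 1$, so any sequence of $\tu_a(K)$ null-homologous twists witnessing algebraic untwisting of $K$ is, word for word, a sequence of the same length in the definition of $\sd_a(K)$.

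For $\tu_a(K) \leq 2\sd_a(K)$, my plan is to prove a local replacement lemma: if $K'$ is obtained from $K$ by a single null-homologous $m$-twist, then some knot $\widetilde K$ with $\Delta_{\widetilde K}(t) = \Delta_{K'}(t)$ can be obtained from $K$ by at most two null-homologous $(\pm 1)$-twists. Given this replacement, applying it to each of the $\sd_a(K)$ moves in an optimal algebraic surgery description sequence for $K$ yields a sequence of at most $2\sd_a(K)$ null-homologous twists ending at a knot with Alexander polynomial $1$, which gives the bound. To prove the lemma itself, I would interpret the $m$-twist as $(-1/m)$-surgery on an unknot $U \subset S^3 \setminus K$ with $\mathrm{lk}(U,K) = 0$, bounding a disk $D$ with $|D \cap K| = 2k$. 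A Kirby-calculus blow-up rewrites this rational surgery as integer surgeries on a two-component link consisting of $U$ (now framed $\pm 1$) together with a meridian $\mu$ of $U$ (also framed $\pm 1$); since $\mu$ is a meridian of a curve in $S^3 \setminus K$, it bounds a small disk disjoint from $K$ and is in particular null-homologous. Performing the two surgeries sequentially, one realizes them as a pair of null-homologous $(\pm 1)$-twists, and Kirby equivalence of the resulting handle decomposition with the original one gives $\Delta_{\widetilde K}(t) = \Delta_{K'}(t)$.

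The main obstacle is the Kirby-calculus bookkeeping needed to make the replacement lemma honest. After the first of the two null-homologous $(\pm 1)$-twists $K \rightsquigarrow K_1$, one must verify that the second attaching curve is still null-homologous in $S^3 \setminus K_1$ and that the ambient manifold is once again $S^3$, so that the second twist is a legitimate null-homologous twist. One must also check that the framings produced by the blow-up are exactly $\pm 1$ after suitable handle slides, so that only two surgeries (not a longer continued-fraction expansion of $-1/m$) are needed; this is precisely what locks in the factor $2$ in the bound, as opposed to a worse bound depending on $|m|$. That the factor cannot be improved in general is suggested by the infinite family of \Cref{thm:sd != tu}, where $\tu(K_n) = 2\sd(K_n) = 2$.
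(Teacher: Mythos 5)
Your first inequality is fine. The gap is in the ``local replacement lemma'': the claim that a single null-homologous $m$-twist can always be rewritten, by a blow-up and handle slides, as two null-homologous $(\pm 1)$-twists fails when $m$ is even, and this is exactly the point you flag as ``the main obstacle'' and then assume away. Concretely, the reverse slam dunk turns the $\mp 1/m$-framed unknot $U$ into a $0$-framed unknot $U_1$ together with a meridian $U_2$ of framing $\pm m$, with linking matrix $\left(\begin{smallmatrix} 0 & 1 \\ 1 & \pm m \end{smallmatrix}\right)$. Handle slides among the surgery curves change this matrix only by unimodular congruence; for $m$ even this form is even, whereas $\mathrm{diag}(+1,-1)$ is odd, so no sequence of slides can produce two unlinked $\pm 1$-framed unknots. (Sliding $U_2$ over the $0$-framed $U_1$ changes its framing in steps of $2$, so you reach $\pm 1$ only when $m$ is odd.) This parity obstruction is precisely why the paper's geometric argument in \Cref{thm:geometric} only achieves $\tu(K)\leq 2\sd(K)+1$, with the sharpness of the $+1$ left open in Question \ref{q::thm-sharp?}; your lemma, even in the weakened form where only the Alexander polynomial of the output must match, is not delivered by the Kirby-calculus mechanism you describe, and you give no alternative mechanism for even $m$.

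The paper proves $\tu_a(K)\leq 2\sd_a(K)$ by an entirely algebraic route that sidesteps this issue: $\tu_a(K)=u_a(K)$ by \.{I}nce (\Cref{thm:ua=tua}), $u_a(K)\leq 2\alggenus(K)$ by Feller--Lewark, and $\alggenus(K)\leq \sd_a(K)$ (\Cref{g_alg-sd_a}), the last following from McCoy's \Cref{twist-algebraic-genus}, which shows that a single null-homologous $m$-twist changes $\alggenus$ by at most one. If you want to keep a replacement-lemma flavor, the correct substitute is McCoy's \Cref{g-alg-staircase}: one unit of algebraic genus can always be traded for one positive and one negative null-homologous twist, which is what actually locks in the factor $2$ at the algebraic level, where the even-$m$ parity obstruction disappears.
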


\begin{theorem} 
    \label{thm:geometric}
    Let $K\subset S^3$ be a knot.
    Then $\sd(K) \leq \tu(K) \leq 2\sd(K)+1$.
\end{theorem}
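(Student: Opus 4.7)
The easy inequality $\sd(K) \leq \tu(K)$ follows directly from the definitions. A null-homologous $\pm 1$-twist is exactly a null-homologous $m$-twist with $m = \pm 1$, so any length-$\tu(K)$ sequence of $\pm 1$-twists unknotting $K$ is automatically a valid surgery description sequence, giving $\sd(K) \leq \tu(K)$.

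For the reverse inequality $\tu(K) \leq 2\sd(K) + 1$, my plan is to start with a length-$\sd(K)$ sequence of null-homologous $m_i$-twists unknotting $K$ and to replace each $m_i$-twist, one at a time, by at most two null-homologous $\pm 1$-twists. Given a single $m$-twist on a disk $D$ with $D \cap K = 2k$ points and twisting circle $U = \partial D$ satisfying $\mathrm{lk}(U,K) = 0$, the core construction is a pair of nearby disks $D_1, D_2$ with $\mathrm{lk}(\partial D_i, K) = 0$ such that a $\pm 1$-twist on $D_1$ followed by a $\pm 1$-twist on $D_2$ produces the same resulting knot as the $m$-twist on $D$. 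Viewing this through Kirby calculus, the $m$-twist corresponds to $-1/m$-surgery on $U$, and I would expand this rational surgery via reverse slam-dunk into $\pm 1$-framed surgeries on a short chain whose components can be chosen null-homologous in the complement of $K$.

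The main obstacles are geometric rather than combinatorial. First, a direct continued-fraction expansion of $-1/m$ typically introduces framings other than $\pm 1$; overcoming this requires introducing an auxiliary $0$-framed meridian together with Rolfsen twists to normalize the framings, a standard Kirby-calculus technique that must here be performed while simultaneously controlling the linking numbers with $K$ so that each resulting surgery is a legitimate null-homologous $\pm 1$-twist. Second, the extra ``$+1$'' in the bound most likely arises because the pairwise local replacement of $m_i$-twists by two $\pm 1$-twists does not quite reproduce the intermediate knots of the original sequence on the nose: a single global correction $\pm 1$-twist at the end reconciles the resulting knot with the unknot. Verifying that exactly one correction suffices (rather than one per replacement step, which would be too expensive) will be the technical heart of the argument, and is presumably achieved by making the local replacements compatible enough that their discrepancies cancel as the sequence progresses.
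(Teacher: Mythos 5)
Your treatment of the first inequality and your general strategy for the second (reverse slam-dunk on the $\pm 1/m$-framed twisting circle, then Kirby moves to normalize framings while preserving null-homology in the complement of $K$) are in the right spirit, but there is a genuine gap at the crucial step, and your explanation of where the ``$+1$'' comes from is not correct. After the reverse slam dunk you have a $0$-framed meridian $U_1$ linked once with a $\mp m$-framed unknot $U_2$; the only way to lower the framing of $U_2$ without touching $K$ is to slide $U_2$ over $U_1$, and each such slide changes the framing of $U_2$ by $2\,\mathrm{lk}(U_1,U_2)=\pm 2$. Hence the framing of $U_2$ can be driven to $\pm 1$ exactly when $m$ is \emph{odd}; one final slide of $U_1$ over $U_2$ then gives two unlinked null-homologous $\pm 1$-framed unknots, so an odd multi-twist costs two single twists. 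For \emph{even} $m$ the parity obstruction blocks this, and your claim that every $m_i$-twist can be replaced by at most two $\pm 1$-twists is precisely what you cannot establish (indeed, whether it is ever impossible is essentially Question~\ref{q::thm-sharp?}).

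The actual source of the ``$+1$'' is this parity issue, not a terminal correction twist. The paper handles the even case by splitting \emph{one} even multi-twist as $2k = (\pm 1) + (2k\mp 1)$, paying one extra single twist to create an auxiliary $\pm 1$-framed null-homologous unknot $U_3$; for every other even multi-twist, sliding $U_2$ over $U_3$ shifts the framing of $U_2$ by one, restoring odd parity so the two-twist procedure applies, and $U_3$ is recovered unlinked afterward so it can be reused. Your proposed mechanism --- that the local replacements fail to reproduce the intermediate knots and a single global $\pm 1$-twist at the end reconciles the discrepancy --- does not work: if each replacement genuinely realizes the same knot transformation no correction is needed, and if it does not, there is no reason a single twist could absorb the accumulated error. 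To repair your argument you need the odd/even dichotomy and the shared auxiliary curve, not a final correction move.
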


\noindent The proof of \Cref{thm:algebraic} relies on work of Duncan McCoy relating the untwisting number to the algebraic genus \cite{mccoy2019nullhomologous}.
The proof of Theorem \ref{thm:geometric} is constructive (involving surgery diagrams and Kirby calculus) and allows one to reduce multiple twists in a single region to at most 3 twists in separate regions.

\subsection*{Organization}

In Section \ref{sec:alg untwisting invts}, we give formal definitions of all relevant invariants, as well as some useful prior results.  We also prove Theorem \ref{thm:algebraic} as a consequence 
of the work of McCoy in \cite{mccoy2019nullhomologous}.
We prove Theorem \ref{thm:sd != tu} in Section \ref{sec:sd-tu} by providing an infinite family of examples where the invariants disagree.
Theorem \ref{thm:geometric} is proved in Section \ref{sec:geometric}.

\begin{table}[h!]
	\centering
	\small
	\begin{tabularx}{\textwidth}{lX}
		\toprule
		Invariant & Definition \\
		\midrule
		$u(K)$ & unknotting number of $K$, i.e.\, minimal number of crossing changes to unknot \\
		$u_{a}(K)$ & alg.\ unknotting number, minimal number of crossing changes to Alexander polynomial-one knot \\
        $\tu_a(K)$ & alg.\ untwisting number, minimal number of null-homologous twists to Alexander polynomial-one knot \\ 
		$\tu(K)$ & untwisting number, i.e., the minimal number of null-homologous twists to unknot \\
		$\sd(K)$ & surgery description number, i.e., the minimal number of null-homologous multi-twists (on the \newline same region of $K$) to unknot \\
		$\sd_{a}(K)$ & algebraic surgery description number, i.e., the minimal number of null-homologous multi-twists \newline (on the same region of $K$) to Alexander polynomial-one knot\\
		$\alggenus(K)$ & algebraic genus, i.e., minimal difference in genus between a Seifert surface $F$ for $K$ and a subsurface \newline whose boundary is an Alexander polynomial-one knot\\
		\bottomrule
	\end{tabularx} 
	\caption{Overview of knot invariants appearing in this paper.}
	\label{table:definitions_of_invariants}
\end{table}
\section*{Acknowledgements}

This work is the product of a research group formed during the American Institute for Mathematics (AIM) virtual Research Community on \emph{4-dimensional topology}.
The paper is based on work completed while the first, second, third, and fifth authors were in residence at the Mathematical Sciences Research Institute in Berkeley, California, during the summer of $2022$.
SK was supported by a National Research Foundation of Korea (NRF) grant funded by the Korea government (MSIT) (NRF-2022R1C1C2004559).
BR was supported by the Max Planck Institute for Mathematics in Bonn.

We want to thank Peter Feller, Stefan Friedl and Duncan McCoy for their valuable comments on an earlier draft.  In particular, Duncan McCoy suggested a strengthening of Theorem \ref{thm:geometric}.

\section{Algebraic untwisting invariants} 
\label{sec:alg untwisting invts}

One way to study an unknotting operation is to analyze its impact on the Alexander polynomial of a knot. The effect of an operation on the Alexander polynomial gives rise to \textit{algebraic unknotting operations}:

\begin{definition}
    Given an unknotting operation $\mathcal{U}$ and a knot $K$, the \textit{algebraic $\mathcal{U}$-number} $\mathcal{U}_{a}(K)$ is the minimal number of  $\mathcal{U}$-operations that must be performed in order to convert $K$ into a knot with Alexander polynomial $1$.
\end{definition}

We certainly have that $\mathcal{U}_{a}(K)\leq\mathcal{U}(K)$ for any unknotting operation  $\mathcal{U}$ and knot $K$. A lower bound on the algebraic unknotting and untwisting numbers is the topological $4$-genus. Another (typically tighter) upper bound on the topological $4$-genus is the \textit{algebraic genus}, defined by Feller and Lewark \cite{feller_classical_2019}.

\begin{definition}
    \label{algebraic-genus-defintion}
    The \textit{algebraic genus} $\alggenus(K)$ of a knot $K$ is the minimum difference in genus $g(F)-g(F^\prime)$ between a Seifert surface $F$ for $K$ and a subsurface $F^\prime\subset F$ with the property that $\partial F^\prime=K^\prime$ is a knot with $\Delta_{K^\prime}(t)=1$.
\end{definition}

We note that Definition \ref{algebraic-genus-defintion} implies that a knot $K$ has $\alggenus(K)=0$ if and only if $\Delta_K(t)=1$. 
McCoy proves the following useful characterization of the sensitivity of the algebraic genus to null-homologous twisting.

\begin{theorem}[{\cite[Theorem 1.1]{mccoy2019nullhomologous}}]
    \label{twist-algebraic-genus}
    If $K$ and $K^\prime$ are knots and $m,n \in \mathbb{Z}$ are such that a null-homologous $m$-twist followed by a null-homologous $n$-twist on $K$ results in $K^\prime$ and $-mn$ is a square, then
    \[
        |\alggenus(K)-\alggenus(K^\prime)| \leq 1.
    \]
\end{theorem}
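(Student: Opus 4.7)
The plan is to reduce the statement to a computation at the level of Seifert matrices. Begin with a Seifert surface $F$ for $K$ realizing the algebraic genus: containing a subsurface $F_0\subset F$ with $\partial F_0$ having Alexander polynomial $1$ and $g(F)-g(F_0)=\alggenus(K)$. Each null-homologous twist is described by an unknot $U_i\subset S^3$ bounding a disk $D_i$ with $\text{lk}(U_i,K)=0$ meeting $K$ algebraically trivially. After an ambient isotopy, and after possibly stabilizing $F$ (which leaves $\alggenus$ unchanged), arrange so that each $U_i$ is realized as a simple closed curve $\gamma_i$ on $F$. With this setup, the same underlying surface $F$, reinterpreted in $S^3$ after $-1/m$- and $-1/n$-surgery along $U_1$ and $U_2$, serves as a Seifert surface $\tilde F$ for $K'$ with $g(\tilde F)=g(F)$.

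Next I would track how the Seifert matrix changes. If $V$ is the Seifert matrix of $F$ with respect to a basis of $H_1(F;\mathbb{Z})$ and $v_i$ is the column vector recording $[\gamma_i]$ in this basis, then performing the null-homologous $m$-twist along $\gamma_1$ followed by the $n$-twist along $\gamma_2$ modifies $V$ to a new Seifert matrix of the schematic form
\[
\tilde V \;=\; V \;+\; m\, v_1 v_1^{\top} \;+\; n\, v_2 v_2^{\top},
\]
up to correction terms reflecting the linking of $\gamma_i$ with other curves on $F$. The Alexander polynomial of $K'$ is then $\det(t\tilde V-\tilde V^{\top})$, and the key observation is that the only way $\tilde V$ differs from $V$ is via a rank-$2$ perturbation confined to the sublattice spanned by $v_1,v_2$.

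The heart of the argument is to use the hypothesis $-mn=c^2$ to exhibit a primitive class $\alpha\in\mathbb{Z}\langle v_1,v_2\rangle$ on which the combined rank-$2$ perturbation above restricts trivially (for suitable sign choices, the condition $-mn=c^2$ is precisely what guarantees the existence of such a null class over $\mathbb{Z}$). Such a class corresponds to a simple closed curve on $\tilde F$ that can be absorbed into the algebraic-genus data: one gets a subsurface $\tilde F_0\subset\tilde F$ containing $F_0$ together with one additional handle, whose boundary still has Alexander polynomial $1$ and which satisfies $g(\tilde F)-g(\tilde F_0)\leq \alggenus(K)+1$. The reverse inequality is symmetric: the inverse $(-m)$-twist and $(-n)$-twist convert $K'$ back into $K$, and $-(-m)(-n)=-mn$ remains a square. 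The main obstacle I anticipate is making this absorption step precise. Having a candidate null class is not by itself enough; verifying that the boundary knot of the enlarged subsurface genuinely has Alexander polynomial $1$ requires controlling how the restricted Seifert form transforms, and this is exactly the point where the hypothesis that $-mn$ is a \emph{square} (not merely that some algebraic expression vanishes) is used in an essential way to force the relevant determinant to evaluate to $1$.
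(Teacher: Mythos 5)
First, note that the paper does not actually prove this statement: it is imported verbatim as \cite[Theorem 1.1]{mccoy2019nullhomologous}, so your attempt is being measured against McCoy's proof rather than anything in this article. Your central arithmetic observation is the right one --- the binary form $mx^2+ny^2$ represents $0$ nontrivially over $\mathbb{Z}$ exactly when $-mn$ is a perfect square (e.g.\ $(x,y)=(c,m)$ with $c^2=-mn$), and this isotropy is indeed what lets a pair of twists cost only one unit of algebraic genus instead of two. That insight is genuinely the heart of the matter.

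However, the absorption step, which you flag as the anticipated obstacle, is not a technical verification you can defer --- it is the entire content of the theorem, and as written your mechanism does not work. (i) Your bookkeeping is inconsistent: you claim $g(\widetilde F)=g(F)$ and then produce $\widetilde F_0\supset F_0$ with \emph{one additional handle}, which would give $g(\widetilde F)-g(\widetilde F_0)=\alggenus(K)-1$, i.e.\ it would ``prove'' that the twists strictly \emph{decrease} $\alggenus$ --- absurd when $\alggenus(K)=0$. The actual difficulty runs the other way: the rank-$2$ perturbation generically \emph{destroys} the Alexander-triviality of $F_0$, and one must show that only one handle's worth of that data is lost. (ii) A handle contributes \emph{two} generators to $H_1$, so a single isotropic class $\alpha$ cannot by itself furnish the required even-rank Alexander-trivial sublattice; moreover $\det(tB-B^{\top})\doteq 1$ is neither inherited by sublattices nor guaranteed by adjoining a class on which the perturbation vanishes --- you must control all the off-diagonal Seifert pairings of $\alpha$ with $H_1(F_0)$, not just the self-pairing, and nothing in the proposal does this. (iii) The vector you perturb by is the linking functional $a\mapsto \operatorname{lk}(a,\gamma_i)$, not the homology class $[\gamma_i]\in H_1(F)$; these differ by an application of the Seifert form, and conflating them obscures the computation. (iv) The two twists are performed \emph{sequentially}, so the second twisting circle lives in the complement of the intermediate knot and need not coexist with the first on any single Seifert surface for $K$; your setup treats them as simultaneous disjoint curves on $F$ without justification. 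McCoy's argument avoids these problems by working with the Feller--Lewark algebraic characterization of $\alggenus$ in terms of Seifert matrices containing Alexander-trivial blocks, enlarging the surface with handles supported near the twist regions so that the old block survives intact and the square condition is used to organize the \emph{new} generators --- rather than by perturbing a fixed genus-minimizing surface and trying to salvage its subsurface afterwards.
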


\begin{proposition}[{\cite[Proposition 10]{mccoy2019nullhomologous}}]
    \label{g-alg-staircase}
    Given a link with $\alggenus(K)>0$, there exists a knot $K^\prime$ with $\alggenus(K^\prime)=\alggenus(K)-1$ such that $K$ can be obtained from $K^\prime$ by one right-handed and one left-handed null-homologous twist.
\end{proposition}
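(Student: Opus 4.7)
Let $g := \alggenus(K) > 0$, and fix a Seifert surface $F$ of genus $g_F$ for $K$ together with a subsurface $F_0 \subseteq F$ with $\partial F_0 = J$ satisfying $\Delta_J(t) = 1$ and $g_F - g(F_0) = g$; such $F, F_0$ exist by the definition of algebraic genus. The plan is to construct $K'$ as the boundary of an intermediate subsurface of $F$ of genus exactly one less than $g_F$, then realize $K$ as the result of one right-handed and one left-handed null-homologous twist on $K'$, and finally deduce the equality $\alggenus(K') = g-1$ from \Cref{twist-algebraic-genus}.

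For the first step, the complement $C := F \setminus \interior(F_0)$ is a compact surface of genus $g$ with two boundary components, $J$ and $K$. Since $g \geq 1$, I pick a separating simple closed curve $\gamma \subset \interior(C)$ decomposing $C$ into a genus-$(g-1)$ subsurface $C'$ with boundary $J \sqcup \gamma$ and a genus-one subsurface $N$ with boundary $K \sqcup \gamma$. Setting $F'' := F_0 \cup_J C'$ and $K' := \gamma = \partial F''$ produces a subsurface of $F$ with $F_0 \subseteq F''$ and $g(F'') = g_F - 1$, which by definition of algebraic genus immediately yields the upper bound $\alggenus(K') \leq g - 1$.

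Next I would exhibit $K$ as the image of $K'$ under one right-handed and one left-handed null-homologous twist. The subsurface $N \subset F \subset S^3$ is an embedded genus-one cobordism between $K'$ and $K$, and the idea is that its single handle can be ``cancelled'' by a pair of opposite-sign null-homologous twists on $K'$. Concretely, I pick disjoint simple closed curves $\alpha, \beta \subset N$ giving a symplectic basis for the first homology of the genus-one handle of $N$, and select unknots $U_+, U_- \subset S^3$ that link $\alpha, \beta$ respectively, arranged so that each bounds a disk meeting $K'$ in equal numbers of positive and negative points. Then $\text{lk}(U_\pm, K') = 0$, and a local surgery calculus argument should show that $(+1)$-surgery on $U_+$ followed by $(-1)$-surgery on $U_-$ precisely cancels the handle of $N$ and turns $K'$ into $K$.

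The main obstacle in this plan is the geometric construction of the previous paragraph: one must arrange the unknots $U_\pm$ so that simultaneously their linking numbers with $K'$ vanish (so the twists are null-homologous) and the two $\pm 1$-surgeries realize the handle cancellation of $N$. In particular, the bounding disks of $U_\pm$ must pierce $K'$ non-trivially, yet with equal numbers of positive and negative punctures, while still containing the handle of $N$ in a way that is cancelled by the surgery. Once this is in place, the equality $\alggenus(K') = g - 1$ follows from the upper bound above combined with \Cref{twist-algebraic-genus} applied to $m = +1$, $n = -1$ (so $-mn = 1$ is a square): if we had $\alggenus(K') \leq g - 2$, the theorem would give $\alggenus(K) \leq \alggenus(K') + 1 \leq g - 1 < g$, contradicting $\alggenus(K) = g$.
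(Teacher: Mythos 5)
The paper gives no proof of this statement---it is imported verbatim from McCoy \cite[Proposition 10]{mccoy2019nullhomologous}---so there is no in-paper argument to measure yours against; I can only assess the proposal on its own terms. The bookkeeping in your plan is sound: splitting off a separating curve $\gamma$ bounding a genus-one piece $N$ with $K$, the count $g(F'')-g(F_0)=g-1$ giving $\alggenus(K')\leq g-1$, and the deduction of equality from \Cref{twist-algebraic-genus} with $m=1$, $n=-1$ (so $-mn=1$ is a square) are all fine. But the step you yourself flag as ``the main obstacle'' is not a deferrable technicality; it is essentially the entire content of the proposition, and as sketched it does not go through.

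The difficulty is that a null-homologous twist applied to $K'$ produces the image of $K'$ under a twist homeomorphism supported near a disk; it does not attach or cancel a handle of a bounding surface. What you actually need is that two twists of opposite sign bring the handle of $N$ into standard position --- band cores unknotted, mutually unlinked, correctly framed, and unlinked from $F''$ --- after which $\partial(F''\cup N)$ is isotopic to $\partial F''=K'$. For an arbitrary separating curve $\gamma$ on a fixed surface, the two bands of $N$ can be knotted and linked in arbitrarily complicated ways, and there is no reason that $\pm 1$-surgery on two unknots ``linking $\alpha$ and $\beta$'' achieves all of this; nor does your construction explain why the two twists emerge with opposite handedness, which you need for the final appeal to \Cref{twist-algebraic-genus}. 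The known arguments (Livingston's null-homologous unknotting construction and McCoy's proof) do not twist along curves dual to an arbitrary handle of a minimal surface: they use the algebraic characterization of $\alggenus$ via Seifert matrices to \emph{choose} a surface and handle decomposition in which the relevant curves are unknots lying \emph{on} the Seifert surface (such curves automatically have linking number zero with the boundary knot), and then apply a realization lemma for $\mp 1$-surgery along those curves. Without some version of that realization lemma and that freedom of choice, the geometric heart of your argument is missing.
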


Feller and Lewark show that for a knot $K$ the algebraic genus and the algebraic unknotting number are related by $\alggenus(K)\leq u_a(K)\leq 2\alggenus(K)$ \cite{feller_classical_2019}. We will show that in fact 
\begin{equation}
    \label{g_alg-chain} 
    \alggenus(K)\leq \sd_a(K)\leq u_a(K)\leq 2\alggenus(K)\leq 2\sd_a(K)
\end{equation}

and that $\sd_a(K)$ can provide a better lower bound for $u_a(K)$ than $\alggenus(K)$. 
We first show the following, that the algebraic genus is in fact a lower bound on the algebraic surgery description number.

\begin{proposition}
    \label{g_alg-sd_a}
    Let $K\subset S^{3}$ be a knot.
    Then $\alggenus(K)\leq \sd_{a}(K)$.
\end{proposition}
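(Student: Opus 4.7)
The plan is to show that a single null-homologous multi-twist can change the algebraic genus by at most $1$, and then to conclude the inequality by chaining this bound along an optimal sequence realizing $\sd_a(K)$.

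First, I would invoke \Cref{twist-algebraic-genus} with the choice $n = 0$. If $K'$ is obtained from $K$ by a single null-homologous $m$-twist, then one may view this as an $m$-twist on $K$ followed by a trivial $0$-twist (on any suitable null-homologous disk), since a $0$-twist leaves the knot type unchanged. Because $-m \cdot 0 = 0 = 0^2$ is a perfect square, McCoy's bound yields $|\alggenus(K) - \alggenus(K')| \leq 1$.

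Now suppose $\sd_a(K) = s$, so there is a sequence of knots $K = K_0, K_1, \ldots, K_s$ in which each $K_{i+1}$ is obtained from $K_i$ by a single null-homologous $m_i$-twist, and $\Delta_{K_s}(t) = 1$. By the observation following \Cref{algebraic-genus-defintion}, $\Delta_{K_s}(t) = 1$ forces $\alggenus(K_s) = 0$. Applying the one-twist bound at each step and then using the triangle inequality,
\[
    \alggenus(K) = |\alggenus(K_0) - \alggenus(K_s)| \leq \sum_{i=0}^{s-1} |\alggenus(K_i) - \alggenus(K_{i+1})| \leq s = \sd_a(K).
\]

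The only delicate point I would want to verify is that \Cref{twist-algebraic-genus} genuinely permits the degenerate case $n = 0$. The statement allows arbitrary integers $m,n \in \mathbb{Z}$, and a $0$-twist is just the identity operation, so the hypothesis is satisfied in a trivial sense. If one prefers to avoid this trick, an alternative is to extract the single-twist bound directly from McCoy's Seifert-form argument in \cite{mccoy2019nullhomologous}, which is ultimately a linear-algebraic estimate controlling the change in algebraic genus under a single null-homologous multi-twist and so handles the one-twist case without ambiguity. This potential technicality is the main, and essentially only, obstacle; once the one-twist bound is in hand, the proposition follows immediately from the telescoping estimate above.
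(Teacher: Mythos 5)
Your proposal is correct and follows essentially the same route as the paper: both apply \Cref{twist-algebraic-genus} with $n=0$ to bound the change in $\alggenus$ under a single null-homologous $m$-twist by $1$, then telescope along a minimal $\sd_a$-sequence ending at an Alexander polynomial one knot, which has $\alggenus = 0$. Your extra care in justifying the degenerate $n=0$ case (via $-m\cdot 0 = 0^2$) is a reasonable precaution but matches exactly what the paper implicitly relies on.
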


\begin{proof}
    Suppose that $K$ is a knot with $\sd_a(K)=k$. Then there exists a sequence of $k$ null-homologous $m_i$-twists (for $1\leq i\leq k$) converting $K$ to the unknot (which in particular has algebraic genus 0).
    By Theorem \ref{twist-algebraic-genus} (with $n=0$), each of these $m_i$-twists decreases the algebraic genus by at most $1$, whence $\alggenus(K)\leq k$.
\end{proof}

Before proving \Cref{thm:algebraic}, we need to note the following result of \.{I}nce.
\begin{theorem}[\cite{ince_untwisting_2016}]
\label{thm:ua=tua}
Let $K\subset S^3$ be a knot.  Then $u_a(K) = \tu_a(K)$.
\end{theorem}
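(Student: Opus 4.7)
The plan is to prove $u_a(K) = \tu_a(K)$ by establishing both inequalities separately. The inequality $\tu_a(K) \leq u_a(K)$ is immediate: a classical crossing change is the special case of a null-homologous twist on $2k = 2$ strands, so any optimal sequence of $u_a(K)$ crossing changes converting $K$ to an Alexander polynomial one knot is, in particular, a sequence of $u_a(K)$ null-homologous twists realizing the same transformation.

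The substantive direction is $u_a(K) \leq \tu_a(K)$. My approach is to show that a single null-homologous twist has exactly the same algebraic effect on the Seifert form as a single crossing change. Suppose a null-homologous twist is performed along an unknot $U \subset S^3 \setminus K$ bounding a disk $D$ which meets $K$ transversely in $2k$ points with $\mathrm{lk}(K,U) = 0$. Because $U$ is null-homologous in $S^3 \setminus K$, one can tube the disk $D$ along the arcs of $K \cap D$ (equivalently, stabilize a chosen Seifert surface $F$ for $K$) to arrange that $U$ lies on a Seifert surface for $K$. A $\pm 1$-twist along the resulting curve $U \subset F$ then changes the Seifert matrix $V$ of $F$ by the rank-one integer perturbation $V \mapsto V \pm vv^T$, where $v$ is the class of $U$ in $H_1(F)$ expressed in the chosen basis. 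A single crossing change effects precisely the same kind of perturbation, with $v$ corresponding to the class of a small loop encircling the two affected strands.

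The conclusion then follows by invoking the algebraic characterization of the algebraic unknotting number due to Murakami, Fogel, and Borodzik-Friedl: $u_a(K)$ equals the minimum number of rank-one integer perturbations $V \mapsto V \pm ww^T$ (for $w \in \mathbb{Z}^{2g}$) needed, up to S-equivalence, to transform a Seifert matrix of $K$ into one with trivial Alexander polynomial. Thus a sequence of $\tu_a(K)$ null-homologous twists realizes a chain of $\tu_a(K)$ rank-one perturbations terminating at a Seifert matrix with trivial Blanchfield form, yielding $u_a(K) \leq \tu_a(K)$.

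The main obstacle is the geometric step of arranging the twisting unknot $U$ to lie on a Seifert surface for $K$ and reading off the resulting Seifert matrix change; this requires a careful tubing construction exploiting $[U] = 0$ in $H_1(S^3 \setminus K)$. A secondary subtlety is ensuring that the algebraic characterization of $u_a$ accommodates arbitrary integer rank-one perturbations rather than only those by primitive or standard basis vectors, which is part of the content of the Murakami-Fogel-Borodzik-Friedl theorem and permits the vectors $v$ arising from null-homologous twists on many strands to be handled on equal footing with those coming from crossing changes.
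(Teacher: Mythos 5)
The paper does not prove this theorem; it quotes it from \.{I}nce \cite{ince_untwisting_2016}, whose argument runs through the Borodzik--Friedl characterization $u_a(K)=\mu(K)$ of the algebraic unknotting number as the minimal size of a suitable Hermitian presentation matrix of the Blanchfield pairing: the trace of the $n$ surgeries on the twisting circles produces an $n\times n$ presentation matrix, giving $u_a(K)\leq \tu_a(K)$. Your easy direction $\tu_a\leq u_a$ is fine, and your overall skeleton for the hard direction (show a null-homologous twist acts on the Seifert form as a rank-one perturbation, then invoke an algebraic characterization of $u_a$) is reasonable, but both load-bearing steps have genuine gaps as written.

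First, the geometric step. You claim one can ``tube the disk $D$ along the arcs of $K\cap D$'' to arrange that the twisting circle $U$ \emph{lies on} a Seifert surface; that is both the wrong target and the wrong construction. What the surgery formula for the Seifert form actually requires is that $U$ be \emph{geometrically disjoint} from some Seifert surface $F$ (then $\mp1$-surgery on $U$ carries $F$ to a Seifert surface of the new knot and changes the matrix by $\pm\xi\xi^T$ with $\xi_i=\operatorname{lk}(a_i,U)$). That disjointness is achievable, but by tubing the \emph{Seifert surface} along subarcs of $U$ joining consecutive intersection points of $U\cap F$ of opposite sign (these exist because $\operatorname{lk}(U,K)=0$ forces the signed count to vanish), reducing $|U\cap F|$ by two at each stabilization. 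Arranging $U$ to lie \emph{on} a Seifert surface is a strictly stronger demand that your sketch does not deliver, and tubing $D$ along arcs of $K\cap D$ does not produce a Seifert surface for $K$ at all (it modifies a surface whose boundary is $U$, not $K$). Second, the algebraic step: the characterization you quote --- that $u_a(K)$ equals the minimal number of rank-one perturbations $V\mapsto V\pm ww^T$, up to $S$-equivalence, needed to reach trivial Alexander polynomial --- is not a theorem you can cite in this form. Murakami's (and Fogel's) characterization uses the \emph{algebraic unknotting operation}, a size-increasing move on Seifert matrices modelling a crossing change, not a rank-one perturbation of a fixed-size matrix. The inequality you need, namely that $n$ rank-one perturbations ending at trivial Alexander polynomial force $u_a(K)\leq n$, does hold, but deriving it requires passing to Blanchfield-form presentation matrices and invoking $u_a=\mu$ from Borodzik--Friedl --- which is precisely the content of \.{I}nce's proof. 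So the citation is doing the work your argument leaves undone; with the geometric step corrected as above and the algebraic step routed explicitly through $u_a=\mu$, your approach does close up, but not as a black-box appeal to ``Murakami--Fogel--Borodzik--Friedl.''
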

We are now ready to prove \Cref{thm:algebraic}: that, for any knot $K$,  $\sd_a(K)\leq \tu_{a}(K) \leq 2\sd_a(K).$

\begin{proof}[Proof of Theorem \ref{thm:algebraic}]
    Since any crossing change can be realized as a null-homologous twist, and a single null-homologous twist is an $m-$twist with $m=\pm 1$, we have $\sd_{a}(K)\leq \tu_a(K)$ for any knot $K$. Theorem \ref{thm:ua=tua} then implies that $\sd_{a}(K) \leq u_a(K)$ for any knot $K$. Combining Proposition \ref{g_alg-sd_a} with Feller-Lewark's result \cite{feller_classical_2019} that $u_a(K)\leq 2\alggenus(K)$, we have that $u_a(K)\leq 2\sd_a(K).$
\end{proof}

\begin{note}
    In \cite{borodzik_untwisting_2019-1}, Borodzik showed that the minimal number of null-homologous twists \textit{on two strands} needed to convert a knot $K$ into a knot with Alexander polynomial $1$ is always less than \textit{three} times the algebraic surgery description number. 
    In fact, our Theorem \ref{thm:algebraic}, together with the fact that a crossing change is a special case of a null-homologous two-strand twist and the fact that $u_a = \tu_a$, refines this upper bound to twice the algebraic surgery description number. 
   
\end{note}

Even though the algebraic unknotting $u_a(K)$ and untwisting numbers $\tu_a(K)$ coincide, the algebraic surgery description number $\sd_a(K)$ can be strictly less than $u_a(K)=\tu_a(K)$; this is the content of Corollary \ref{cor:infinite-sda-neq-tua}.

To conclude that the algebraic surgery description number $sd_a(K)$ can be a better lower bound on the algebraic unknotting number $u_a(K)$ than the algebraic genus $\alggenus(K)$, we should show that there is a knot for which $\alggenus(K)\neq sd_a(K).$ We provide infinitely many examples with this property in the next section.

\section{Infinite families of knots with \texorpdfstring{$\alggenus<\sd_a$}{algebraic genus less than algebraic surgery description number}}
\label{g_alg<sd_a}

A knot $K \subset S^3$ is called \emph{topologically $H$-slice} in a closed, smooth $4$--manifold $M$ if $K\subset \partial (M\setminus B^4)$ bounds a locally flat, properly embedded, null-homologous topological disk in $M\setminus B^4$. In the context of this paper, if a knot $K$ can be converted to a knot which is topologically slice in $B^4$ via only left-handed or, respectively, only right-handed nullhomologous  $m$--twists, then $K$ is topologically $H$--slice in $\#_{n}\pm\mathbb{CP}^{2}$ for some $n$.
The following well-known result (see, for example, \cite{kauffman_signature_1976}) will be useful to us.

\begin{proposition}
    \label{prop: H-slice sig}
    If a knot $K$ is topologically $H$--slice in $\#_{m}\mathbb{CP}^{2}$, then for any $\omega \in S^1$ with $\Delta_K(\omega) \neq 0$, 
    $$-2m\leq \sigma_K(\omega) \leq 0,$$
    where $\sigma_K(\omega)$ is the Levine-Tristram signature function of $K$.
\end{proposition}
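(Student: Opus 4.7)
The plan is to invoke a classical signature bound for knots bounding locally flat, null-homologous surfaces: if $K \subset S^3$ bounds a locally flat, properly embedded surface $F$ of genus $g$ with $[F] = 0 \in H_2(W, \partial W)$ in a compact oriented $4$-manifold $W$ with $\partial W = S^3$, then for every $\omega \in S^1$ satisfying $\Delta_K(\omega) \neq 0$,
\[
    -2\, b_2^+(W) - 2g \;\leq\; \sigma_K(\omega) \;\leq\; 2\, b_2^-(W) + 2g.
\]
For $\omega = -1$ and $W = B^4$ this specializes to the classical Kauffman--Taylor inequality $|\sigma_K(-1)| \leq 2g$; the extension to general $\omega$ off the zeros of $\Delta_K$ and to general $W$ is a known consequence of the signature-theoretic techniques of \cite{kauffman_signature_1976}.

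Granted this bound, the proposition follows by direct substitution. Apply it to the pair $(W, D)$, where $W := X \setminus \interior{B^4}$ is the punctured $H$-slicing manifold $X = \#_m \mathbb{CP}^2$ and $D \subset W$ is the locally flat, properly embedded, null-homologous disk supplied by the $H$-sliceness hypothesis; here $g(D) = 0$. Since $X$ is positive definite with $b_2(X) = m$, the manifold $W$ satisfies $b_2^+(W) = m$ and $b_2^-(W) = 0$. Plugging these values into the displayed inequality immediately yields $-2m \leq \sigma_K(\omega) \leq 0$, which is the claim.

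The main obstacle, if one wishes to reprove the signature bound from scratch rather than cite it, is the construction of the Hermitian $\omega$-twisted intersection form on the exterior $W_D := W \setminus \nbhd{D}$ and the identification of its signature defect (relative to $\sigma(W)$) with $\sigma_K(\omega)$. Because $[D] = 0$, the meridian of $D$ determines a well-defined abelianization $H_1(W_D) \to \Z$ that extends the one on $S^3 \setminus \nbhd{K} \subset \partial W_D$, so one can work with coefficients in the $\C[\Z]$-module $\C_\omega$ (in which the generator acts by multiplication by $\omega$). The hypothesis $\Delta_K(\omega) \neq 0$ ensures the requisite non-degeneracy of the twisted chain complex on $\partial W_D$, and a Mayer--Vietoris plus Poincar\'e--Lefschetz duality argument then controls the twisted Betti numbers of $W_D$ by $b_2^\pm(W) + 2g$; this is the only nontrivial ingredient, and it is already established in the literature we cite.
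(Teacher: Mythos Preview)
The paper does not actually prove \Cref{prop: H-slice sig}; it simply records the statement as a ``well-known result'' and points to \cite{kauffman_signature_1976}. Your proposal therefore goes well beyond what the paper supplies, and it is correct: the general signature bound you quote,
\[
    -2\,b_2^+(W)-2g \;\leq\; \sigma_K(\omega) \;\leq\; 2\,b_2^-(W)+2g,
\]
for a null-homologous locally flat genus-$g$ surface in $W$ with $\partial W=S^3$ and $\Delta_K(\omega)\neq 0$, is exactly the inequality that comes out of the twisted-signature/Novikov-additivity argument (equivalently, from the inequality $|\sigma_K(\omega)+\sigma(W)|\leq b_2(W)+2g$, since $b_2(W)\pm\sigma(W)=2b_2^{\pm}(W)$). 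Specializing to the punctured $\#_m\CP{2}$ with $g=0$ gives the proposition immediately, just as you say. Your sketch of how to rederive the bound (abelianization via the meridian using $[D]=0$, $\C_\omega$-twisted intersection form on $W\setminus\nbhd{D}$, non-degeneracy from $\Delta_K(\omega)\neq 0$) is also the standard route and is accurate.

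In short: your argument is correct and is precisely the kind of proof the paper's bare citation is gesturing at; there is nothing to compare against, since the authors opted not to include a proof.
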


\noindent In particular, the proposition above implies that if the signature function of a knot takes on both positive and negative values, then $\sd_a \neq 1$.

\begin{theorem} 
    \label{thm: sd>g_alg}
    If $K$ and $K^\prime$ are knots such that 
    \begin{itemize} 
        \item $u_a(K) = u_a(K^\prime) = 1$, 
        \item the signature function of $K$ takes a positive value, and
        \item the signature function of $K^\prime$ takes a negative value,
    \end{itemize}
    then $\alggenus(K\#K^\prime) = 1$.  If, in addition, the signature function of $K\#K^\prime$ takes both positive and negative values, then $\alggenus(K\#K^\prime)< \sd_a(K\#K^\prime)$.  
\end{theorem}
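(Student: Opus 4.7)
The plan is to prove the two assertions separately. The lower bound $\alggenus(K\#K')\geq 1$ is immediate: an Alexander polynomial-one knot is topologically slice by Freedman and hence has identically zero Levine--Tristram signature function wherever defined; thus $\sigma_K$ taking a positive value forces $\Delta_K\neq 1$, so $\Delta_{K\#K'}=\Delta_K\Delta_{K'}\neq 1$ and $\alggenus(K\#K')\geq 1$.

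For the matching upper bound, use $u_a(K)=1$ to get an Alexander polynomial-one knot $J$ produced from $K$ by one crossing change. Since $\sigma_J\equiv 0$ and $\sigma_K$ attains a positive value, this crossing change strictly decreases the signature there. Under the identification of a crossing change with a null-homologous $m$-twist for $m\in\{+1,-1\}$ (recalled in the introduction), the sign of the signature change is tied to $\sgn(m)$; thus the unknotting crossing change of $K$ corresponds to a fixed $\varepsilon\in\{+1,-1\}$. By the symmetric argument, $u_a(K')=1$ together with a negative value of $\sigma_{K'}$ produces an Alexander polynomial-one knot $J'$ from $K'$ by a null-homologous $(-\varepsilon)$-twist. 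Performing these two moves on opposite sides of the connected sum converts $K\#K'$ to the Alexander polynomial-one knot $J\#J'$ via a null-homologous $\varepsilon$-twist followed by a null-homologous $(-\varepsilon)$-twist. Since $-\varepsilon(-\varepsilon)=\varepsilon^2=1$ is a square, Theorem~\ref{twist-algebraic-genus} gives $\alggenus(K\#K')\leq\alggenus(J\#J')+1=1$.

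For the second assertion, suppose toward contradiction that $\sd_a(K\#K')=1$, so some null-homologous $m$-twist with $m\neq 0$ carries $K\#K'$ to an Alexander polynomial-one knot $L$. Treat the case $m>0$; the case $m<0$ is symmetric and eliminated by the positive values of $\sigma_{K\#K'}$. Since $L$ bounds a locally flat topological slice disk in $B^4$ by Freedman, Proposition~4.1 of \cite{inceUntwistingInformationHeegaard2017a} (recalled in the introduction) implies that $K\#K'$ is strongly topologically $H$-slice in $\#^m(-\mathbb{CP}^2)$. Applying Proposition~\ref{prop: H-slice sig} to the mirror of $K\#K'$ inside $\#^m\mathbb{CP}^2$ and translating the sign back yields $0\leq\sigma_{K\#K'}(\omega)\leq 2m$ for every $\omega$ with $\Delta_{K\#K'}(\omega)\neq 0$. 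Since $\sigma_{K\#K'}$ is locally constant off the finitely many roots of $\Delta_{K\#K'}$, the hypothesized negative value is attained at some such $\omega$, a contradiction. Hence $\sd_a(K\#K')\geq 2>1=\alggenus(K\#K')$. The most delicate step is the sign matching in the upper bound argument for $\alggenus$: the opposite-sign signature hypotheses on the two summands ensure that the twists on the $K$-side and $K'$-side have opposite signs, making $-mn=1$ a square and allowing Theorem~\ref{twist-algebraic-genus} to apply.
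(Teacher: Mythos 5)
Your proposal is correct and follows essentially the same route as the paper: the lower bound $\alggenus(K\#K^\prime)\geq 1$ from nonvanishing signature forcing $\Delta\neq 1$, the upper bound by observing that the two algebraic unknotting crossing changes must have opposite signs (since one must lower and the other raise the signature toward zero) so that McCoy's Theorem~\ref{twist-algebraic-genus} applies with $-mn=1$ a square, and the second assertion via topological $H$-sliceness in $\#_m\pm\mathbb{CP}^2$ combined with Proposition~\ref{prop: H-slice sig}. Your version is only slightly more explicit in the sign bookkeeping for the $H$-slice obstruction; no substantive difference.
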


\begin{proof}
    Suppose $K$ and $K^\prime$ are knots which satisfy the bulleted assumptions of Theorem \ref{thm: sd>g_alg}.   Now consider the knot $J = K\#K^\prime$. Note that because $K$ and $K^\prime$ have nontrivial signature functions, neither $K$ nor $K^\prime$ has Alexander polynomial 1.  So $\Delta_J(t) \neq 1$ and $\alggenus(J) \neq 0$.
    Because $u_a(K) = u_a(K^\prime) = 1,$ the knots $K$ and $K^\prime$ can be converted into knots with Alexander polynomial 1 via a single crossing change. Recall that a crossing change can change the Levine-Tristram signature function by at most $\pm 2$, where the sign depends on the sign of the crossing change (see, for example, \cite[Lemma 5]{livingston2018}).  This implies that the crossing changes converting $K$ and $K^\prime$ to Alexander polynomial 1 knots can be taken to be of opposite signs.  So the knot $J$ can be converted into a knot $J^\prime$ with $\Delta_{J^\prime}(t) = 1$ via a sequence of two crossing changes, one positive and one negative. Since a crossing change can be realized by single null-homologous twist, by Proposition \ref{twist-algebraic-genus} we have that
    $$|\alggenus(J)-\alggenus(J^\prime)|\leq 1.$$
    Because $\Delta_{J^\prime}(t) = 1,$ we have that $\alggenus(J^\prime)=0$.  So $\alggenus(J) = 1$ as desired.
    
    Now, suppose that $K$ and $K^\prime$ also satisfy that $\sigma_{J}(\omega) = \sigma_{K}(\omega)+\sigma_{K^\prime(\omega)}$ takes both positive and negative values.  By Proposition \ref{prop: H-slice sig}, $J=K\#K^\prime$ is not topologically $H$-slice in $\#_{m}\pm\mathbb{CP}^{2}$ for any $m\in\mathbb{N}$.  
    In particular, $J$ cannot be converted to a topologically slice knot using a single $\mathrm{sd}$--move.  Thus $\mathrm{sd}_a(J) > 1.$
\end{proof}

\begin{theorem}
    There exists an infinite family $\{K_n\}_{n=2}^{\infty}$ of prime knots such that $\alggenus(K_n)<\sd_a(K_n)$ for all $n\geq 2$.
\end{theorem}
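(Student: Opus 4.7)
The plan is to produce an infinite family of prime knots $\{K_n\}$ each satisfying $\alggenus(K_n) = 1$ together with $\sd_a(K_n) \geq 2$; combined with Proposition \ref{g_alg-sd_a}, this yields the theorem.

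The mechanism producing the strict inequality is the signature obstruction used in the second half of the proof of Theorem \ref{thm: sd>g_alg}. Concretely, if the Levine--Tristram signature function $\sigma_{K_n}$ takes both positive and negative values, then $\sd_a(K_n) \geq 2$: a single null-homologous $m$-twist converting $K_n$ to an Alexander polynomial $1$ (and hence topologically slice, by Freedman) knot would exhibit $K_n$ as topologically $H$-slice in $\#_{|m|}\pm\mathbb{CP}^2$, contradicting Proposition \ref{prop: H-slice sig}, whose allowed signature window is either $[-2|m|, 0]$ or $[0, 2|m|]$.

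The main obstacle, and the point requiring the most care, is primality: Theorem \ref{thm: sd>g_alg} directly produces composite knots $K \# K'$, and connected sums of nontrivial knots are never prime. The plan is therefore to exhibit an explicit infinite family of prime knots whose signature functions straddle zero and whose algebraic genus equals $1$. Natural candidates are certain pretzel knots $P(p_n, q_n, r_n)$ with odd parameters of mixed sign, for which primality is classical and the Levine--Tristram signature is computable in closed form from the pretzel parameters; the parameters can be chosen so that $\sigma_{K_n}$ takes both positive and negative values while keeping the Alexander polynomial nontrivial. An alternative route would be to start from a composite example satisfying Theorem \ref{thm: sd>g_alg} and apply a ``primifying'' operation (for instance, a tangle replacement or Murasugi sum) that preserves both the algebraic genus and the Levine--Tristram signature function; either route, but particularly the second, requires careful verification that the operation does not alter the relevant invariants.

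To pin down $\alggenus(K_n) = 1$ rather than merely $\alggenus(K_n) \geq 1$, one exhibits an explicit subsurface of a standard Seifert surface for $K_n$ whose boundary has trivial Alexander polynomial and whose genus is one less than that of the full surface. The lower bound $\alggenus(K_n) \geq 1$ is automatic from $\Delta_{K_n}(t) \neq 1$, which follows from the nontriviality of $\sigma_{K_n}$; so the technical work concentrates on the uniform upper bound $\alggenus(K_n) \leq 1$. I expect this parametric subsurface construction, together with the closed-form signature computation, to be the most delicate part of the proof.
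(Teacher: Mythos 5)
Your overall architecture is the right one and matches the paper's: find prime knots $K_n$ with $\alggenus(K_n)=1$ whose Levine--Tristram signature function takes both positive and negative values, and use Proposition \ref{prop: H-slice sig} to conclude $\sd_a(K_n)\geq 2$. However, the step that actually carries the theorem --- producing such prime knots --- is left as two unexecuted alternatives, and your primary candidate family provably cannot work. A $3$-strand pretzel knot $P(p,q,r)$ with all parameters odd bounds the obvious genus-$1$ pretzel surface, so it is a genus-$1$ knot; for any genus-$1$ knot, the normalized Alexander polynomial evaluated at $\omega=e^{i\theta}$ is an affine function of $\cos\theta$ and hence has at most one zero for $\theta\in(0,\pi)$. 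Since $\sigma_K(\omega)\to 0$ as $\omega\to 1$ and the signature function can only jump at zeros of $\Delta_K$ on the circle, $\sigma_K$ takes at most the two values $0$ and $s$ for a single $s\in\{-2,0,2\}$: it can never take both positive and negative values. So no choice of odd pretzel parameters gives the needed signature behavior. Your fallback ("primifying" a connected sum by tangle replacement or Murasugi sum while preserving both $\alggenus$ and the full signature function) is not a known operation and is not verified; Murasugi sum in particular does not control the Levine--Tristram signatures.

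The paper's resolution of exactly this primality obstacle is the $(n,1)$-cabling construction: set $K_n=(K\connsum K')_{n,1}$ for a composite example $K\connsum K'$ from Theorem \ref{thm: sd>g_alg} (e.g.\ $10_{32}\connsum(-10_{82})$). Primality of $(n,1)$-cables is Cromwell's theorem; the satellite formula for the algebraic genus gives $\alggenus(K_n)\leq\alggenus(T(n,1))+\alggenus(K\connsum K')=1$, the cabling formula for the Alexander polynomial gives $\alggenus(K_n)\neq 0$, and Litherland's formula $\sigma_{K_n}(\omega)=\sigma_{T(n,1)}(\omega)+\sigma_{K\connsum K'}(\omega^n)$ shows the signature function still takes both signs. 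If you want to salvage your approach you would need either this cabling idea or an explicit family of prime knots of genus at least $2$ together with a uniform verification that $\alggenus=1$, which is the hard part you deferred.
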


\begin{note}
In fact, since the Tristram-Levine signature and algebraic unknotting number of a knot $K$ are invariants of the $S$-equivalence class of its Seifert matrix, for any Seifert matrices $V,V^\prime$ satisfying the conditions of Theorem \ref{thm: sd>g_alg}, there exist infinitely many knots $K,K^\prime$ with Seifert matrices in the $S$-equivalence classes of $V,V^\prime$, respectively, satisfying the conclusions of the theorem. In particular, our $K_n$ can be chosen to have any adjective (e.g. hyperbolic, quasipositive,...) for which there are infinitely many representative knots with that property in each S-equivalence class, since our proof relies only on the S-equivalence class of $K_n$. 
\end{note}

In the proof below, we exhibit a concrete family of prime knots via cabling because cabling seems to be of independent interest.

\begin{proof} 
Suppose that $K$ and $K'$ are knots which satisfy all the assumptions of Theorem \ref{thm: sd>g_alg}.
    For example, we can take $K = 10_{32}$ and $K' = -10_{82}$ (see Figure \ref{fig: sd_a>g_alg}). For $n\geq 2$, let $K_n := (K\#K')_{n,1}$, the $(n,1)$--cable of $K \# K'$.
    Note that the $(n,1)$--cable of any knot (where $n\geq 2$) is prime by \cite[Theorem 4.4.1]{cromwellKnotsLinks2004a}.
    Then we have that $\alggenus(J_n)\neq 0$ because $\Delta_{K_n}(t) = \Delta_{T(n,1)}(t)\cdot\Delta_{K_1\#K_2}(t^n) \neq 1$ (by \cite[Theorem 6.15]{lickorishIntroductionKnotTheory1997} since $\alggenus(K_1\#K_2)\neq 0$).
    On the other hand, \cite{FMPC-Satelliteknots} 
    tells us how $\alggenus$ acts under satellite operations.
    In particular, $\alggenus(K_n) \leq \alggenus(T(n,1))+\alggenus(K_1\#K_2) = 1.$
    So we have that $\alggenus(K_n) = 1.$
    
    Also, by \cite[Theorem 2]{litherlandSignaturesIteratedTorus1979}, $\sigma_{K_n}(\omega) = \sigma_{T(n,1)}(\omega)+\sigma_{K_1\#K_2}(\omega^n)$.  Because $\sigma_{K_1\#K_2}(\omega)$ takes both positive and negative values, so does $\sigma_{K_n}(\omega) = \sigma_{K_1\#K_2}(\omega^n)$.  Proposition \ref{prop: H-slice sig} then implies that $\sd_a(K_n)>1.$
\end{proof}

\begin{figure}
    \includegraphics[width=0.3\textwidth]{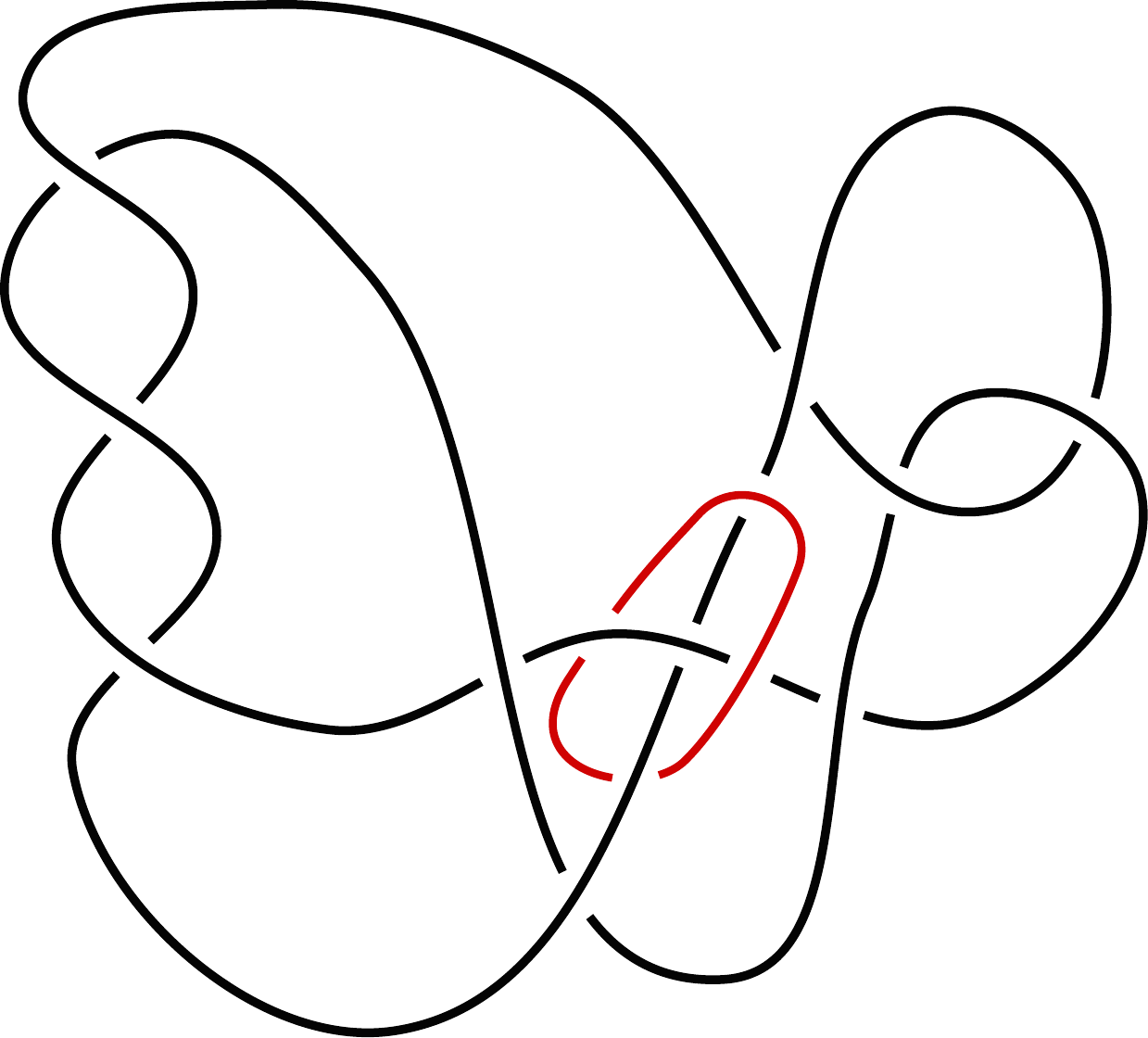} \quad \quad
    \includegraphics[width=0.3\textwidth]{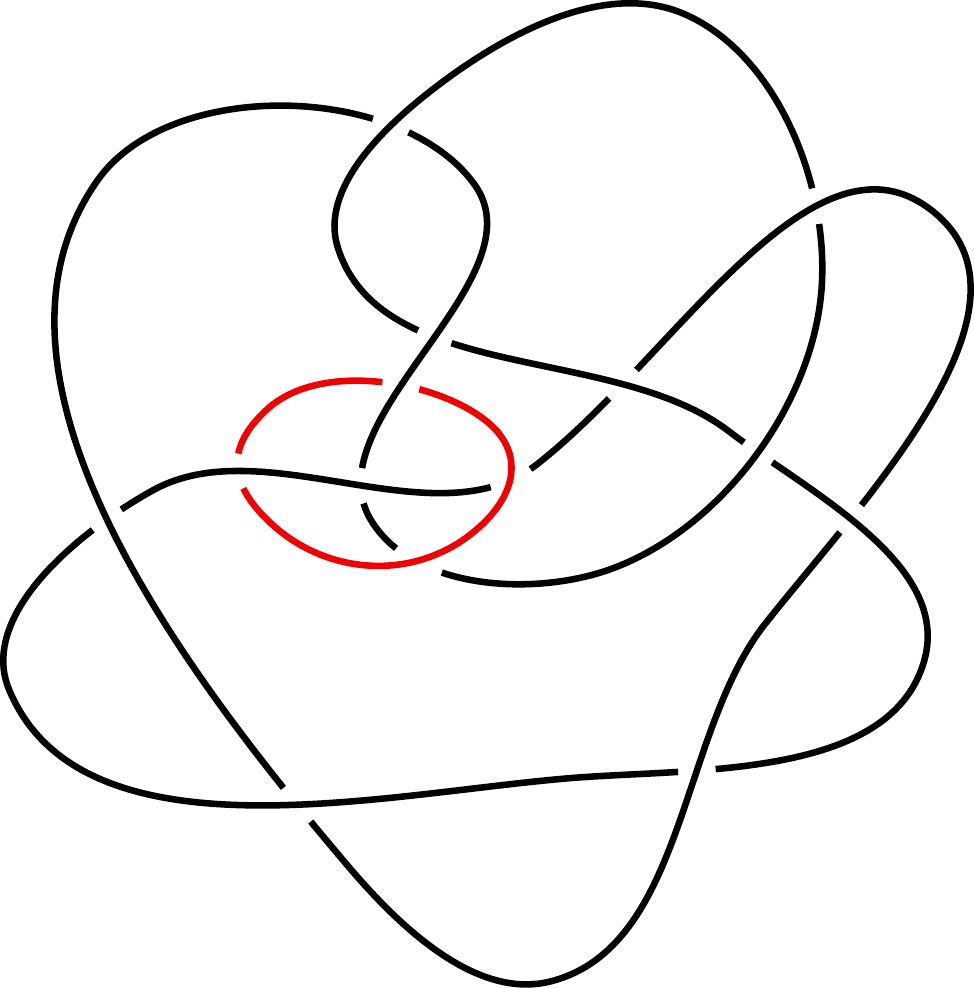}\\
    \vspace{10pt}
    \includegraphics[width=0.4\textwidth]{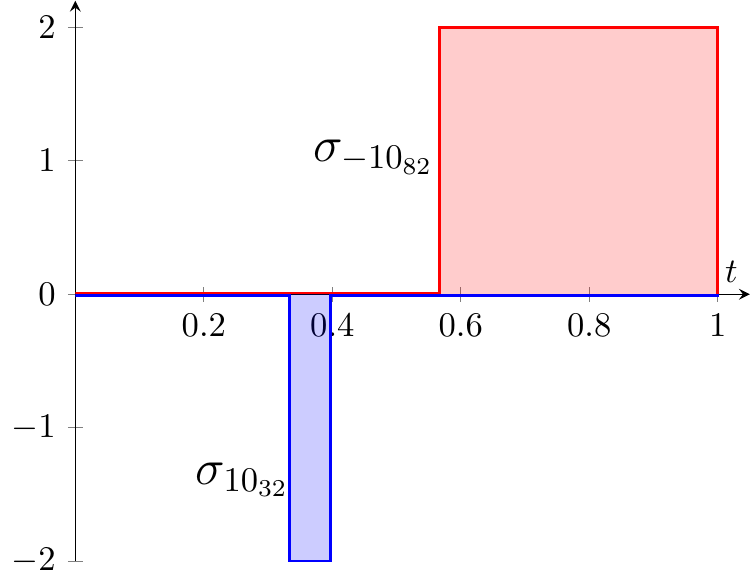}
    \caption{Top: the knots $10_{32}$ (left) and $10_{82}$ (right) with red unknots indicating an unknotting crossing change for each.  Bottom: the Levine-Tristram signature functions for $10_{32}$ and $-10_{82}$ (reparametrized so that $\omega = e^{2\pi i t}$). }
    \label{fig: sd_a>g_alg}
\end{figure}

We remark that, for any knot $K$ with $\alggenus(K)=1$ and $\sd_a(K)\geq 2$, inequality \ref{g_alg-chain} implies that $u_a(K)=2$ and hence that $\sd_a(K)=2$. A literature search suggests that $\{K_n\}$ is the first known infinite family of knots for which $\alggenus<u_a$. Note that, in \cite{feller_classical_2019}, the $3$-genus is used to distinguish between $\alggenus$ and $u_a$ for various knots since $\alggenus(K)\leq g_3(K)$ while $u_a \leq 2g_3(K)$. In our case, the $3$-genus of the $K_n$ grows large, and we use a different strategy for distinguishing between $\alggenus$ and $u_a$.

\section{Relationships between the surgery description and untwisting numbers}
\label{sec:sd-tu}

In the section above, we found an infinite family of knots for which $u_a = \tu_a = \sd_a = 2$. Other examples where $u_a = \tu_a = \sd_a$ are abundant.
We now endeavor to find examples where the two quantities (and other similar quantities) disagree.
In particular, in this section, we examine the square of inequalities below, and show that each inequality can be strict for infinitely many knots.
\begin{equation} 
    \label{ineqs}
    \begin{array}{ccc}
    \sd_a & \leq & \tu_a               \\
    \vertleq & & \vertleq \\
    \ & \ & \ \\
    \sd                    & \leq & \tu                
    \end{array}
\end{equation}

It is easy  to find infinitely many knots such that the vertical inequalities in equation \eqref{ineqs} are strict; for example, any nontrivial knots with Alexander polynomial $1$ satisfy $\sd_a < \sd$ and $\tu_a < \tu$.  Finding such examples for the horizontal inequalities in equation \eqref{ineqs} are more challenging. 

It is known that $u(K)$ and $\tu(K)$ can be arbitrarily different \cite{inceUntwistingInformationHeegaard2017a}.  In contrast, we can find no examples of knots in the literature with $\sd(K) \neq \tu(K)$. We provide the first known examples below; in fact, we find an infinite family $\{K_n\}$ of knots satisfying the stronger inequality $\sd(K_n) < \tu_a(K_n)$ for all $n\geq 2$.  This is the content of Theorem \ref{thm:sd != tu}. The same family provides infinitely many examples where $\sd_a < \tu_a = u_a$.

For the proof of Theorem \ref{thm:sd != tu},
we employ an obstruction to a knot having \emph{algebraic} unknotting number 1 due to Borodzik-Friedl \cite{borodzik_unknotting_2015}, which in turn generalizes an unknotting number $1$ obstruction due to Lickorish \cite{harper_unknotting_1985}. 
The obstruction involves the linking pairing on the first homology of the double-branched cover $\Sigma(K)$; see e.g., \cite{gordonAspectsClassicalKnot1978a} for a discussion of the linking pairing.

\begin{theorem}[\cite{borodzik_unknotting_2015}, Theorems 4.5 and 4.6] 
    \label{BF-theorem}
    If a knot $K$ can be \emph{algebraically} unknotted by a single crossing change, then there exists a generator $h$ of $H_1(\Sigma(K);\mathbb{Z})$ such that its linking pairing satisfies
    \[
        l(h,h)=\frac{\pm 2}{\det(K)}\in \mathbb{Q}/\mathbb{Z}
    \]
\end{theorem}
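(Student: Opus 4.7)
The plan is to realize the hypothesis geometrically as a surgery description of the double branched cover and then read the linking form directly off that surgery. Let $K'$ be the knot with $\Delta_{K'}(t)=1$ obtained from $K$ by the crossing change. Because $\det(K') = |\Delta_{K'}(-1)| = 1$, the cover $\Sigma(K')$ is an integer homology sphere.

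Next, realize the crossing change as a rational tangle replacement inside a $3$-ball $B \subset S^3$ meeting $K$ (and $K'$) in four arcs, with the two tangles being the $+1$- and $-1$-tangles respectively. By the Montesinos trick, the double cover of $B$ branched over these arcs is a solid torus, and its core $\gamma$ is a knot in both $\Sigma(K)$ and $\Sigma(K')$ with the property that these two branched covers arise as Dehn fillings of the common exterior $\Sigma(K') \setminus \nu(\gamma)$ along two distinct slopes.

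The central quantitative step is to identify the surgery slope. Translating the tangle change $+1 \mapsto -1$ into surgery coefficients on $\partial \nu(\gamma)$, with the framing induced by the crossing disk, shows that $\Sigma(K)$ is obtained from $\Sigma(K')$ by $p/q$-surgery on $\gamma$ with $q = \pm 2$, the sign depending on whether the crossing change is positive or negative. Since $\gamma$ is automatically null-homologous in the $\ZHS$ $\Sigma(K')$, the order of $H_1(\Sigma(K); \Z)$ equals $|p|$, forcing $|p| = \det(K)$.

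Finally, a standard computation from the surgery description $\Sigma(K) = \Sigma(K')_{p/q}(\gamma)$ (for instance via a two-handle cobordism onto $\Sigma(K')\times[0,1]$) shows that the meridian $h$ of $\gamma$ generates $H_1(\Sigma(K); \Z) \cong \Z/p$ with linking pairing $l(h, h) = q/p = \pm 2/\det(K)$, as claimed. The main technical obstacle I expect is pinning down the slope numerator $q = \pm 2$: this requires careful bookkeeping of the framing on $\gamma$ induced by the crossing disk, as well as of sign conventions distinguishing positive from negative crossings. A secondary subtlety is that Borodzik--Friedl phrase algebraic unknotting in terms of Seifert-matrix moves rather than geometric crossing changes, so one may need either to first realize an algebraic move by a geometric crossing change within the $S$-equivalence class, or to run a parallel argument entirely at the level of the Blanchfield pairing, where the Montesinos surgery admits a direct algebraic analogue.
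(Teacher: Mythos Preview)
The paper does not prove this statement at all: it is quoted from Borodzik--Friedl (their Theorems~4.5 and~4.6) and invoked as a black box in the proof of Theorem~\ref{thm:sd != tu}. So there is no ``paper's own proof'' to compare against.

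That said, your sketch is essentially correct and is the classical Lickorish argument, lightly adapted. Since $u_a(K)=1$ is defined in this paper as the existence of a \emph{geometric} crossing change from $K$ to some $K'$ with $\Delta_{K'}(t)=1$, you really do have a genuine crossing change available, and $\Sigma(K')$ is a $\ZHS$; the Montesinos trick then exhibits $\Sigma(K)$ as $\pm\det(K)/2$--surgery on a knot in $\Sigma(K')$, from which the linking form computation follows as you describe. One small slip: the ball $B$ meets $K$ in \emph{two} arcs (a $2$--string tangle), not four. Your closing caveat is well placed but ultimately unnecessary here: Borodzik--Friedl's own proof is indeed phrased algebraically via the Blanchfield pairing (showing it is presented by a size--$1$ matrix and then specializing to the linking form), but for the statement as written in this paper, with its geometric definition of $u_a$, your Montesinos-trick route suffices without passing through $S$--equivalence or Blanchfield forms.
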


The proof of the following theorem follows Lickorish's proof that the knot $P(3,1,3)$ does not have unknotting number 1 (the main theorem of \cite{harper_unknotting_1985}).

\begingroup
\def\thetheorem{\ref{thm:sd != tu}}
\begin{theorem}
    There are infinitely many knots $\{K_n\}$ with $\sd(K_n)=1$ and $\tu(K_n)= \tu_a(K_n)=2$.
\end{theorem}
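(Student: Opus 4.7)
The plan is to exhibit a concrete infinite family $\{K_n\}$ generalizing Lickorish's example $P(3,1,3)$, whose structure manifests the strict separation between the surgery description number and the algebraic untwisting number. The overall strategy has three ingredients: a diagrammatic construction giving $\sd(K_n) \leq 1$, a second diagrammatic argument giving $\tu(K_n) \leq 2$, and a linking-form obstruction giving $\tu_a(K_n) \geq 2$.

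For the upper bound $\sd(K_n) \leq 1$, I would identify inside a chosen diagram of $K_n$ a flat disk $D$ meeting $K_n$ in $2k$ points of zero algebraic intersection, together with an integer $m = m(n)$ such that the null-homologous $m$-twist along $\partial D$ converts $K_n$ to the unknot. I would verify this either by simplifying the resulting diagram through Reidemeister moves, or by working on the surgery diagram: view the twist as $-1/m$-framed surgery on an unknotted curve $U$ with $\mathrm{lk}(U,K_n) = 0$, and use Kirby calculus (blow-downs of $U$ together with slam-dunks and handle slides) to reduce the diagram to the $0$-framed unknot.

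For the upper bound $\tu(K_n) \leq 2$, I would locate two disjoint null-homologous twisting disks in a diagram of $K_n$, each carrying a single $\pm 1$ full twist, whose combined effect is the unknot. A natural source of such pairs is to take the single $m$-twisting disk from the previous step and split it into two parallel copies bearing complementary twists, or alternatively to exploit the symmetry present in the pretzel-like construction to produce a second, independent null-homologous twist that completes the unknotting. Combined with the lower bound below, this yields $\tu(K_n) = 2$.

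The main obstacle, and the only genuinely delicate step, is the lower bound $\tu_a(K_n) \geq 2$. By Theorem \ref{thm:ua=tua} this is equivalent to $u_a(K_n) \geq 2$, so we apply the Borodzik--Friedl obstruction (Theorem \ref{BF-theorem}): it suffices to show that no generator $h$ of $H_1(\Sigma(K_n); \mathbb{Z})$ satisfies $l(h,h) = \pm 2/\det(K_n)$ in $\mathbb{Q}/\mathbb{Z}$. I would (i) compute a Goeritz matrix for a convenient diagram of $K_n$, producing a presentation of $H_1(\Sigma(K_n))$ and of the linking pairing $l$, (ii) enumerate the generators of $H_1(\Sigma(K_n))$ modulo the units acting on the cyclic (or small-rank) summand, (iii) compute the self-linkings of these generators as functions of $n$, and (iv) verify that the set of self-linking values is disjoint from $\{\pm 2/\det(K_n)\}$ for every $n \geq 2$. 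The crux is organizing the computation so the exclusion is uniform in $n$; this is the direct analogue of Lickorish's number-theoretic step in the $P(3,1,3)$ argument, adapted to the $n$-parameter family. Once this is done, $\tu_a(K_n) \geq 2$ follows, and together with the constructions above we obtain $\sd(K_n) = 1$ and $\tu(K_n) = \tu_a(K_n) = 2$ for all $n$ in the family.
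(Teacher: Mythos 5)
Your outline follows the same route as the paper: a Lickorish-style family of pretzel knots, a single null-homologous multi-twist giving $\sd(K_n)\leq 1$, a decomposition of that multi-twist into single twists giving $\tu(K_n)\leq 2$, and the Borodzik--Friedl linking-form obstruction (via $\tu_a=u_a$, Theorem \ref{thm:ua=tua}) giving $\tu_a(K_n)\geq 2$. However, as written the proposal has genuine gaps rather than merely omitted routine details. Most importantly, no family is ever exhibited; the theorem is an existence statement and all three of your steps depend on the specific choice. The paper takes $K_n=P(10n+3,1,3)$, for which a single $+1/2$-surgery on a null-homologous unknot converts $K_n$ to $P(10n+3,1,-1)$, the unknot, so $\sd(K_n)=1$. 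Relatedly, your mechanism for $\tu(K_n)\leq 2$ is not sound for a general $m$-twist: the only general fact available is that a null-homologous $m$-twist equals $|m|$ successive single twists on the same disk, which gives $\tu(K_n)\leq|m(n)|$ and is useless if $m(n)$ grows (the general bound of Theorem \ref{thm:geometric} would only give $\tu\leq 3$). One must therefore \emph{engineer} the family so that the unknotting move is a $\pm 2$-twist, as the paper does; ``two parallel copies bearing complementary twists'' either cancel (if the signs are opposite) or presuppose $|m|=2$.

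The step you explicitly defer is exactly the crux. To run Theorem \ref{BF-theorem} uniformly in $n$ you need the linking form of $\Sigma(K_n)$ in closed form for all $n$; a Goeritz-matrix computation for an unspecified family gives you nothing checkable. The paper gets this by observing that $K_n$ is the two-bridge knot with continued fraction $[10n+3,1,3]=(40n+15)/4$, so $\Sigma(K_n)\cong L(40n+15,4)$ has cyclic first homology generated by a class $\mu$ with $l(\mu,\mu)=4/(40n+15)$; the obstruction then reduces to the congruence $\pm 2\equiv 4t^2 \pmod{40n+15}$, which fails for every $n$ because $5$ divides $40n+15$ and $\pm 2$ is not a square mod $5$. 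Without choosing the family so that such a uniform congruence obstruction is available --- here, a determinant carrying a fixed prime modulo which $\pm 2$ is a non-residue --- the plan cannot be completed. So the strategy is the correct one, but the proof is not yet a proof: you still need to (i) produce a concrete family unknotted by a single $\pm 2$-twist and (ii) verify the number-theoretic exclusion uniformly in $n$.
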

\addtocounter{theorem}{-1}
\endgroup

\begin{figure}
    \centering
    \def\svgwidth{\linewidth}
\begingroup%
  \makeatletter%
  \providecommand\color[2][]{%
    \errmessage{(Inkscape) Color is used for the text in Inkscape, but the package 'color.sty' is not loaded}%
    \renewcommand\color[2][]{}%
  }%
  \providecommand\transparent[1]{%
    \errmessage{(Inkscape) Transparency is used (non-zero) for the text in Inkscape, but the package 'transparent.sty' is not loaded}%
    \renewcommand\transparent[1]{}%
  }%
  \providecommand\rotatebox[2]{#2}%
  \newcommand*\fsize{\dimexpr\f@size pt\relax}%
  \newcommand*\lineheight[1]{\fontsize{\fsize}{#1\fsize}\selectfont}%
  \ifx\svgwidth\undefined%
    \setlength{\unitlength}{1063.18892832bp}%
    \ifx\svgscale\undefined%
      \relax%
    \else%
      \setlength{\unitlength}{\unitlength * \real{\svgscale}}%
    \fi%
  \else%
    \setlength{\unitlength}{\svgwidth}%
  \fi%
  \global\let\svgwidth\undefined%
  \global\let\svgscale\undefined%
  \makeatother%
  \begin{picture}(1,0.3227726)%
    \lineheight{1}%
    \setlength\tabcolsep{0pt}%
    \put(0,0){\includegraphics[width=\unitlength,page=1]{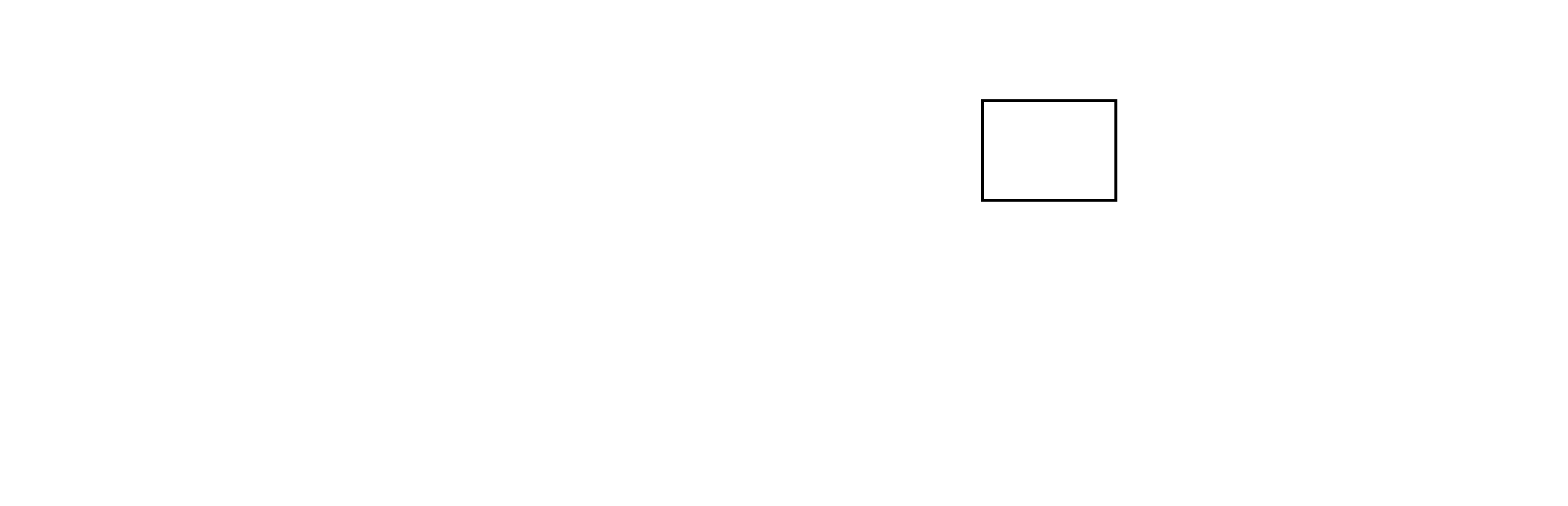}}%
    \put(0.004031,0.1626136){\color[rgb]{0,0,0}\makebox(0,0)[lt]{\lineheight{1.25}\smash{\begin{tabular}[t]{l}$10n+3$\end{tabular}}}}%
    \put(0.63240484,0.21921561){\color[rgb]{0,0,0}\makebox(0,0)[lt]{\lineheight{1.25}\smash{\begin{tabular}[t]{l}$10n+3$\end{tabular}}}}%
    \put(0,0){\includegraphics[width=\unitlength,page=2]{Pretzelknotsaretwo-bridge.pdf}}%
    \put(0.38907424,0.06137611){\color[rgb]{0,0.50196078,0.50196078}\makebox(0,0)[lt]{\lineheight{2.6500001}\smash{\begin{tabular}[t]{l}\textbf{$C$}\end{tabular}}}}%
  \end{picture}%
\endgroup%

    \caption{Two diagrams of the pretzel knots $K_n=P(10n+3,1,3)$; note that the boxed numbers represent \textit{half-}twists. On the left is the standard diagram for 3-strand pretzel knots, together with an unknotted curve $C$ where a $+1/2$ Dehn surgery can be applied to convert $K_n$ to the unknot. On the right is a diagram for the same knot in which it is more clear that the knots are two-bridge. In fact, they have Conway notation $C(10n+3,1,3)$.}
    \label{fig:pretzelknotsaretwobridge}
\end{figure}

\begin{proof}
    The family $K_n$ we consider is the set of pretzel knots of the form $P(10n+3,1,3)$; see Figure \ref{fig:pretzelknotsaretwobridge} for two isotopic diagrams and note that the boxed numbers represent \textit{half-}twists. We first note that $\sd(K_n)=1$ by performing the $+1/2$-surgery on the curve $C$ indicated in the figure. After the surgery, we obtain the pretzel knots $P(10k+3,1,-1)$, all of which are isotopic to the unknot.
    
    To conclude that $\tu(K_n)=2$, it is enough to show that $\tu(K_n)\neq 1$; $\tu(K_n)\leq 2$ since the surgery description move can be effected by two (single) null-homologous twists. 
    
    To show that $\tu(K_n)\neq 1$, first recall that $\tu_a(K)\leq \tu(K)$ and that $\tu_a(K)=u_a(K)$ for any knot $K$. We then assume that $\tu_a(K_n)=1$ for contradiction, and prove that the linking pairing on $H_1(\Sigma(K_n);\mathbb{Z})$ does not satisfy the condition in Theorem \ref{BF-theorem} for any $n\geq 1$. 
    
    First, note that the knots $K_n$ are 2-bridge; see Figure \ref{fig:pretzelknotsaretwobridge}. Each two-bridge knot has a (non-unique) associated fraction $p/q$ with the property that $\Sigma_2(K)\cong L(p,q)$; see, e.g., \cite[Chapter 2]{Kaw-Survey-Knot} for a discussion of two-bridge knots. In fact, $\{K_n\}$ are precisely those 2-bridge knots with continued fraction of the form 
    
    $$[10n+3,1,3]=10n+3+\frac{1}{1+\frac{1}{3}}=\frac{40n+15}{4}.$$
    
    Hence the double-branched covers of these knots $\Sigma(K_n)\cong L(40n+15,4)$ are lens spaces. So in particular, $\Sigma(K_n)$ can be obtained as surgery on a knot $J$ (in fact the unknot) via $\frac{40n+15}{4}$-surgery. This implies that $H_1(\Sigma(K_n);\mathbb{Z})$ is cyclic of order $40n+15$ generated by $\mu$ the image of a meridian of $J$ after surgery, and moreover that $l(\mu,\mu)=\frac{4}{40n+15}$ \cite{harper_unknotting_1985}.
    
    Any generator $h$ of $H_1(\Sigma(K_n))$ is of the form $h=t\mu$ for some integer $t$. Let $h$ be the generator which must exist according to Theorem \ref{BF-theorem} so that:
    \begin{equation}\label{equation:linkingpairing}
    \frac{\pm 2}{40n+15}=l(h,h)=l(t\mu,t\mu)=t^2\cdot l(\mu,\mu)=\frac{4t^2}{40n+15} \in \mathbb{Q}/\mathbb{Z}
    \end{equation}
    
    For the two fractions on the far left and far right of Equation \ref{equation:linkingpairing} to be equivalent in $\mathbb{Q}/\mathbb{Z}$, we must have $\pm 2\equiv 4t^2 \mbox{ (mod 40n+15)}$ so that $\pm 2$ must be a square $\mbox{(mod 40n+15)}.$ We will show that this is not true.
    
    If $\pm 2$ is a square $\mbox{(mod 40n+15)}$ then it also must be a square $\mbox{(mod a)}$ where $a$ is any factor of $40n+15$. In particular, $\pm 2$ must be a square $\mbox{(mod 5)}$. But neither $-2\equiv 3$ nor $2$ are squares $\mbox{(mod 5)}$. This is a contradiction, and hence $\tu_a(K_n)\neq 1$ for each $n$, which forces $\tu(K_n)\neq 1$. 
\end{proof}

Since $\sd_a(K)\leq \sd(K)$ for all knots $K$, the following corollary immediately follows.

\begin{corollary}
    \label{cor:infinite-sda-neq-tua} 
    There are infinitely many knots $\{K_n\}$ for which $\sd_a(K_n)=1$ while $\tu_a(K_n)=u_a(K_n)=2$.
\end{corollary}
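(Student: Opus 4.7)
The plan is essentially a direct combination of Theorem \ref{thm:sd != tu} with the trivial inequality $\sd_a(K) \leq \sd(K)$. I would take $\{K_n\}$ to be the family of pretzel knots $P(10n+3,1,3)$ constructed in Theorem \ref{thm:sd != tu}, which already come equipped with $\sd(K_n) = 1$ and $\tu_a(K_n) = \tu(K_n) = 2$. The second equality $u_a(K_n) = \tu_a(K_n) = 2$ is then immediate from Theorem \ref{thm:ua=tua} (\.{I}nce's result).

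The only thing requiring comment is the equality $\sd_a(K_n) = 1$. The upper bound $\sd_a(K_n) \leq \sd(K_n) = 1$ holds for every knot, because any sequence of null-homologous multi-twists unknotting $K$ certainly terminates at a knot with Alexander polynomial $1$. For the lower bound, I would argue that $K_n$ cannot have $\sd_a(K_n) = 0$: if it did, then $K_n$ itself would have Alexander polynomial $1$, which would force $\tu_a(K_n) = u_a(K_n) = 0$, contradicting $\tu_a(K_n) = 2$.

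There is no real obstacle here — once Theorem \ref{thm:sd != tu} is in hand, the corollary is a one-line observation. If anything, the only subtlety worth flagging explicitly is that the horizontal inequality $\sd_a \leq \tu_a$ (from Theorem \ref{thm:algebraic}) is \emph{strict} on this family, so the same $\{K_n\}$ simultaneously witness strict inequality in both the top row and the left column of the square \eqref{ineqs}.
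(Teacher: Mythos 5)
Your proposal is correct and matches the paper's argument: the paper derives the corollary immediately from Theorem \ref{thm:sd != tu} via the inequality $\sd_a(K) \leq \sd(K)$, exactly as you do. Your explicit justification that $\sd_a(K_n) \neq 0$ (since $\tu_a(K_n)=2$ forces $\Delta_{K_n}(t)\neq 1$) is a detail the paper leaves implicit, but it is the same proof.
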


\noindent Note that Corollary \ref{cor:infinite-sda-neq-tua} is the biggest gap we could hope for in the sense that $\sd_a(K)\leq \tu_a(K)=u_a(K)\leq 2\sd_a(K)$. 

While Theorem \ref{thm:sd != tu} provides infinitely many examples where $\sd < \tu_a$, one might ask if $\sd \leq \tu_a$ in general.  The following theorem provides an answer to this question in the negative. 

\begin{theorem}
    
    The $(p,1)$--cable of the untwisted Whitehead double of any nontrivial knot, which we denote $D_p$, has $\tu_a(D_p)=u_a (D_p)= 0<1=\sd(D_p)$ for all $p\in\mathbb{N}$.
\end{theorem}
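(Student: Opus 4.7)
The plan is to address the algebraic and geometric halves of the statement separately. For the algebraic identity $u_a(D_p) = \tu_a(D_p) = 0$, I would apply the standard cable formula
\[
\Delta_{K_{p,q}}(t) = \Delta_{T(p,q)}(t) \cdot \Delta_K(t^p).
\]
Specializing to $q=1$, the torus knot $T(p,1)$ is the unknot so $\Delta_{T(p,1)}(t) = 1$; combining this with the classical fact that the untwisted Whitehead double $W(J)$ of any knot has trivial Alexander polynomial gives $\Delta_{D_p}(t) = \Delta_{W(J)}(t^p) = 1$. Hence $u_a(D_p) = 0$, and \Cref{thm:ua=tua} yields $\tu_a(D_p) = 0$ as well.

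For the geometric equality $\sd(D_p) = 1$, the lower bound $\sd(D_p) \geq 1$ reduces to showing $D_p$ is nontrivial: the untwisted Whitehead double of a nontrivial knot is classically nontrivial, and for $p \geq 2$ the $(p,1)$-cable of a nontrivial knot is nontrivial (the cabling torus is incompressible, giving a genuine satellite structure), while the case $p=1$ reduces to $D_1 \cong W(J)$ itself. The substantive work is the upper bound $\sd(D_p) \leq 1$. My plan is to exhibit a single null-homologous multi-twist that unknots $D_p$. Let $C \subset S^3 \smallsetminus W(J)$ be the standard unknotting crossing-change curve for the clasp of the Whitehead double: $C$ is an unknot bounding a disk $D$ that meets $W(J)$ transversely in two points of opposite signs, and a $\pm 1$-twist along $D$ converts $W(J)$ to the unknot. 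Choosing a tubular neighborhood $\nu(W(J))$ thin enough that $D \cap \nu(W(J))$ consists of two meridional disks, each of those disks meets $D_p$ in exactly $p$ points (since the $(p,1)$-cable winds $p$ times longitudinally around $W(J)$), with a single sign within each disk and opposite signs between the two. Therefore $|D \cap D_p| = 2p$ and $\mathrm{lk}(C, D_p) = 0$, so a $\pm 1$-twist along $C$ is a bona fide single null-homologous multi-twist on $D_p$.

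The remaining step, which I expect to be the main obstacle, is to verify that this $\pm 1$-twist actually converts $D_p$ to the unknot. The idea is that the twist is supported in a regular neighborhood of $D$ that can be arranged to intersect $\nu(W(J))$ only through the two meridional disks identified above; consequently the cabling pattern inside $\nu(W(J))$ is preserved while the core $W(J)$ is transformed into its unknotted crossing-change image. The resulting knot is then the $(p,1)$-cable of the unknot, which is $T(p,1)$, the unknot. Making this last argument rigorous amounts to a local analysis of the twist along $D$ together with a careful choice of $\nu(W(J))$, after which the upper bound $\sd(D_p) \leq 1$ combines with the lower bound to give $\sd(D_p) = 1$.
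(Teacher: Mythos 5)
Your proposal is correct and follows essentially the same route as the paper: the cabling formula for the Alexander polynomial gives $u_a(D_p)=\tu_a(D_p)=0$, nontriviality of $D_p$ gives $\sd(D_p)\geq 1$, and a single null-homologous multi-twist supported at the clasping region of the Whitehead double realizes $\sd(D_p)\leq 1$. The only difference is cosmetic --- the paper performs a single $2$-twist about the clasping region while you twist once along the curve $C$ meeting the $2p$ cabled strands --- and your added observations that $\mathrm{lk}(C,W(J))=0$ forces $\lvert D\cap D_p\rvert =2p$ with linking number zero and preserves the $0$-framing (so the image is the $(p,1)$-cable of the unknot, i.e., $T(p,1)$) supply exactly the detail the paper leaves implicit.
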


\begin{proof}

    First, note that the Alexander polynomial of $D_p$, for any $p$, is equal to 1 (see the cabling relation in \cite{lickorishIntroductionKnotTheory1997}).  Thus $\tu_a(D_p) = 0$.  On the other hand, since $D_p$ is not unknotted, we must have that $\sd(D_p) \geq 1$. In fact, one can see that $\sd(D_p) = 1$ by performing a single 2-twist about the clasping region in the untwisted Whitehead double.
\end{proof}

\begin{note}
    It is possible to distinguish between $\sd$ and $\tu$ using obstructions from Heegaard Floer homology, though this seems feasible only to show that $\sd = 1 < 2 = \tu$ for individual knots. In particular, the $\sd$-moves in Figure \ref{fig:sd11a_103-10a_68} show that the knots $10_{68}$ and $11a_{103}$ have $\sd(K)=1$, though the fact that $\tu(10_{68}), \tu(11a_{103}) \geq 2$ is a result of the second author \cite[Theorem 1.2]{inceUntwistingInformationHeegaard2017a}.
\end{note}

\begin{figure}[ht!]
    \centering
    \subfloat{\includegraphics[scale = 0.5]{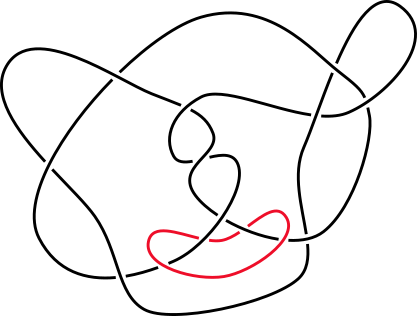}
    \label{fig:10a_68}}
    \quad\quad\quad
    \subfloat{
    \includegraphics[scale = 0.6]{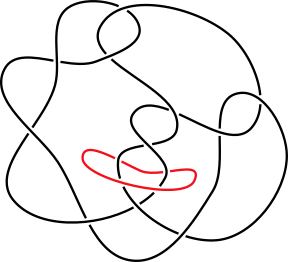}
    \label{fig:11a103:b}}
    \caption{
        \label{fig:sd11a_103-10a_68}
        The knots $10_{68}$ on the left and $11a_{103}$ on the right can be converted to the unknot by inserting two left (resp. right)-handed twists in the regions indicated by the red unknots.
    }
\end{figure}
 
For all examples we produce with $\sd\neq \tu$ the two invariants in fact only differ by 1. In the next section, we will prove Theorem \ref{thm:geometric} which states that the ratio of $tu$ to $sd$ is at most 3. This leaves open the following question.

\begin{question}\label{q::thm-sharp?} Does there exist a knot $K$ with $\sd(K)=1$ but $\tu(K)=3$, or in general so that $\tu(K) = 2\sd(K)+1$? 
\end{question}

Note that the techniques used in the proof of Theorem \ref{thm:sd != tu} cannot be used to obstruct a knot $K$ with $\sd(K)=1$ from having $\tu(K)\leq 2$ since the algebraic invariants can differ at most by a factor of 2.
It also seems unlikely that the Floer theoretic techniques of \cite{inceUntwistingInformationHeegaard2017a} would alone be enough to answer Question \ref{q::thm-sharp?} given the difficulty in obstructing knots from being $H$-slice in indefinite $4$-manifolds \cite{kjuchukova2021slicing}.
Thus, new techniques are likely needed to answer the question above.

\section{An inequality relating surgery description number and untwisting number} 
\label{sec:geometric}

In the previous section we asked whether a knot $K$ with $\sd(K)=1$ and $\tu(K)=3$ can exist, or more generally, if a knot with  $2\sd(K)+1=\tu(K)$ exists. 
In this section, we show that the untwisting number is at most twice the surgery description number plus $1$.

The following theorem was inspired by the work of Borodzik on algebraic $k$-simple knots \cite{borodzik_untwisting_2019-1}.  In addition, Duncan McCoy suggested the last portion of the proof of Theorem \ref{thm:geometric}, improving the upper bound from an earlier version of the paper.

\begingroup
\def\thetheorem{\ref{thm:geometric}}
\begin{theorem}
    For any knot $K$, we have that\,  $\sd(K) \leq \tu(K) \leq 2 \sd(K)+1.$
\end{theorem}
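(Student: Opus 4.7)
The first inequality $\sd(K)\le\tu(K)$ is immediate, since a null-homologous $1$-twist is just the $m=\pm 1$ case of a null-homologous $m$-twist; any length-$\tu(K)$ sequence of $1$-twists unknotting $K$ is automatically a valid $\sd$-unknotting sequence of the same length.

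For the upper bound, the plan is to convert an optimal $\sd$-unknotting sequence of length $k=\sd(K)$ into a sequence of at most $2k+1$ null-homologous $1$-twists using Kirby calculus. Write $K=K_0\to K_1\to\cdots\to K_k=O$ with each step $K_{i-1}\to K_i$ realized as $(-1/m_i)$-Dehn surgery on a null-homologous unknot $U_i\subset S^3\setminus K_{i-1}$ that bounds a disk meeting $K_{i-1}$ transversely in equally many oppositely-oriented points. The first step is to apply the slam-dunk move in reverse to each $U_i$, replacing the $(-1/m_i)$-framed $U_i$ by a $0$-framed $U_i$ together with an $m_i$-framed meridian $W_i$. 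The meridian $W_i$ bounds a small disk disjoint from $K_{i-1}$, so it is null-homologous in the complement of the running knot. Then Rolfsen twists (which do not alter the running knot, because every surgery curve has linking number zero with it) together with handle slides convert both framings into $\pm 1$, at the cost of introducing one additional auxiliary $\pm 1$-framed null-homologous unknot per $i$. Performed independently on each $m_i$-twist, this yields the naive bound $\tu(K)\le 3\,\sd(K)$ of the earlier version of the paper, matching the note that ``multiple twists in a single region'' can be reduced to at most $3$ single twists in separate regions.

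The sharper bound $2\,\sd(K)+1$, suggested by McCoy, is obtained by observing that the auxiliary $\pm 1$-framed unknot produced at each step can be chosen \emph{universally}: a single unknotted $\pm 1$-framed circle $C$ placed in a small ball disjoint from $K$ (and hence null-homologous in every complement that arises in the sequence) can play the role of the third curve for all $k$ of the $\sd$-moves simultaneously. Each of the $k$ $\sd$-moves then contributes only $2$ genuinely new null-homologous $1$-twists, while the single $\pm 1$-surgery on $C$ is shared globally, giving a total of $2k+1$ null-homologous $1$-twists unknotting $K$.

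The main obstacle is the Kirby-calculus lemma underlying the last step: one must exhibit, for a fixed universal $C$, an explicit sequence of handle slides, slam-dunks, and Rolfsen twists that rewrites each $(-1/m_i)$-surgery as two $\pm 1$-surgeries coupled to $C$, while verifying at every step that each new surgery curve remains an unknot, is null-homologous in the complement of the running knot, and preserves the net effect on $K$. The bookkeeping of linking data through the handle slides, and checking that the coupling to $C$ does not spoil the null-homologous condition against the running knot, is the principal technical difficulty.
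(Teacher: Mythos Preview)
Your proposal is a strategy sketch rather than a proof: you explicitly flag the core Kirby-calculus lemma as ``the principal technical difficulty'' and do not carry it out. The paper's proof does exactly this verification, and in doing so reveals a structural point your outline misses.

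The key omission is the odd/even parity distinction. After the reverse slam dunk you have a $0$-framed $U_1$ linked once with an $m$-framed $U_2$. Sliding $U_2$ over $U_1$ changes the framing of $U_2$ by $\pm 2$ (since $\mathrm{lk}(U_1,U_2)=1$ and $U_1$ has framing $0$), so one can reach $\pm 1$ by such slides \emph{only when $m$ is odd}. In that case a final slide of $U_1$ over $U_2$ makes both framings $\pm 1$ and the two curves become unlinked: two $\tu$-moves suffice, with no auxiliary curve at all. Your uniform claim of ``three curves per $m$-twist, one of which is auxiliary'' is therefore not what actually happens; the auxiliary is needed only to correct parity when $m$ is even. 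The Rolfsen twists you invoke are also not part of the argument---everything is handle slides and (reverse) slam dunks.

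The source and reuse of the auxiliary curve are handled differently as well. You propose a single $\pm 1$-framed $C$ sitting in a ball disjoint from $K$. The paper instead obtains its auxiliary $U_3$ by splitting one genuine even $m$-twist into a parallel $\pm 1$-twist and a $\pm(m-1)$-twist (now odd). The crucial point---which you correctly identify as the obstacle but do not resolve---is that after $U_3$ is used to shift parity (slide $U_2$ over $U_3$, then slide $U_3$ over the $0$-framed $U_1$), $U_3$ emerges \emph{unlinked} from $U_1,U_2$ and hence is available, unchanged, for the next even twist. This is exactly the ``universality'' you want, but it requires the specific slide sequence the paper exhibits; without it there is no guarantee your $C$ remains an unknot, null-homologous, and unlinked after each use.
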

\addtocounter{theorem}{-1}
\endgroup

Note that while the following proof involves a series of Kirby calculus moves, the moves used are slam dunk moves (away from the knot), and handle slides involving only the added components (never the original knot); thus none of the moves alter the isotopy class of the knot.  The result is diagrammatic.

\begin{proof}
    The first inequality is clear from the definitions.  To show the second inequality, we will first show that an unknot of framing $\pm 1/(2k+1)$ which is null-homologous in the complement of $K$ can be replaced (via careful Kirby calculus) with two unlinked, null-homologous unknots, one with framing $+1$ and one with framing $-1$.  Thus $2k+1$ full twists in a single twisting region can be realized by a sequence of two full twists (of opposite signs) in some diagram of $K$.  This process (Procedure 1) is described below; an example in the case of five left-handed twists is shown in \Cref{fig:tu<=3sd}.  Throughout, we abuse notation and keep names of unknots unchanged after they have undergone a handle slide. 
    
    \noindent\textbf{Procedure 1:}
    \begin{enumerate}
        \item Use a reverse slam dunk move to view the $\pm 1/(2k+1)$--framed unknot as a $0$--framed unknot $U_1$ geometrically linked once with a $\mp (2k+1)$--framed unknot $U_2$ as in \Cref{fig:tu<=3sd:a}-\ref{fig:tu<=3sd:b}.
        \item By repeatedly sliding $U_2$ over $U_1$, one can reduce the framing on $U_2$ to $\pm 1$. See \Cref{fig:tu<=3sd:c}-\ref{fig:tu<=3sd:f}.  Note that, in each handle slide, only the portion of $U_2$ near $U_1$ is affected.
        While this changes how $K$ and $U_2$ are geometrically linked, the unknots $U_1$ and $U_2$ remain linked once. 
        \item Finally, slide $U_1$ over $U_2$.  This has the effect of changing the framing of $U_1$ by $\mp 1$.  See \Cref{fig:tu<=3sd:f}-\ref{fig:tu<=3sd:g}.
        After an isotopy (\Cref{fig:tu<=3sd:h}), it is not hard to see that the resulting $U_1$ and $U_2$ are unlinked.
    \end{enumerate} 
    
    We now show that unknots with framings $\pm 1$ and $\pm 1/(2k)$ which are null-homologous in the complement of $K$ can be replaced (again, via Kirby calculus) with three unlinked, null-homologous unknots, two with framings $\pm 1$ and one with framing $\mp 1$.  The process (Procedure 2) is described below; an example is shown in \Cref{fig:even}.

    \noindent \textbf{Procedure 2:}
     \begin{enumerate}
         \item Use a reverse slam dunk move to view the $\pm 1/(2k)$--framed unknot as a $0$--framed unknot $U_1$ geometrically linked once with a $\mp (2k)$--framed unknot $U_2$ as in \Cref{fig:even:a}-\ref{fig:even:b}.
         \item At the beginning of the procedure we assumed we had unknots with framings $\pm 1$ and $\pm 1/(2k)$. Call the unknot with $\pm 1$ framing $U_3$. Slide $U_2$ over $U_3$ with framing $\pm 1$ to change the framing on $U_2$ by 1. See \Cref{fig:even:c}.   At this stage, $U_2$ is linked with both $U_1$ and $U_3$. 
         \item Slide $U_3$ over $U_1$ in order to unlink $U_3$ from $U_2$.  The result is that, after an isotopy, $U_3$ is completely unlinked from $U_1$ and $U_2$.  In addition, $U_1$ and $U_2$ are in position to perform the procedure from the previous paragraph.  See \Cref{fig:even:d}-\ref{fig:even:e}.
         \item Apply steps (2) and (3) of Procedure 1.
         \end{enumerate}
         Thus, to see the upper bound, consider the following cases.  \begin{itemize}
             \item First, if the surgery description number can be realized using only $\pm(2k+1)$--moves (odd numbers of full twists in each twisting region), then we apply Procedure 1 to reduce each $(2k+1)$--move to a $+1$-- and $-1$--move.  Thus, in this case, $\tu(K) \leq 2\sd(K)$.
             \item Second, if at least one $\pm(2k)$--move (even number of full twists in a single twisting region) is required to realize the surgery description number, then replace one of the $\pm(2k)$--moves with parallel $\pm 1$-- and $\pm(2k-1)$--framed unknots.  Call the $\pm 1$--framed unknot $U_3$ and now use Procedure 2 with $U_3$ to reduce each remaining $\pm (2k)$--moves to a $+1$-- and $-1$--move.  Thus, $\tu(K) \leq 2\sd(K)+1$.
         \end{itemize}
\end{proof}
\begin{note}
In the proof of Theorem \ref{thm:geometric}, the upper bound of $2\sd(K)+1$ can only be sharp when \emph{every} minimal $\sd$-sequence for $K$ involves \emph{only} even numbers of full twists. In all other cases, consider a minimal $\sd$-sequence which involves at least one null-homologous $(2k+1)$-twist for some $k\in\mathbb{Z}$. We may use Procedure 1 on all $\pm 1/(2k+1)$-framed unknots to convert each into two $\pm 1$-framed unknots, then use Procedure 2 on all $\pm 1/(2k)$-framed unknots (if one exists) using one of the $\pm 1$-framed unknots obtained via Procedure 1 to build an untwisting sequence of length $2\sd(K)$.
\end{note}

\newpage

\begin{figure}[h]
    \centering

    \subfloat[Subfigure 1 list of figures text][Effecting null-homologous twist(s)]{\includegraphics[height=0.15 \textheight]{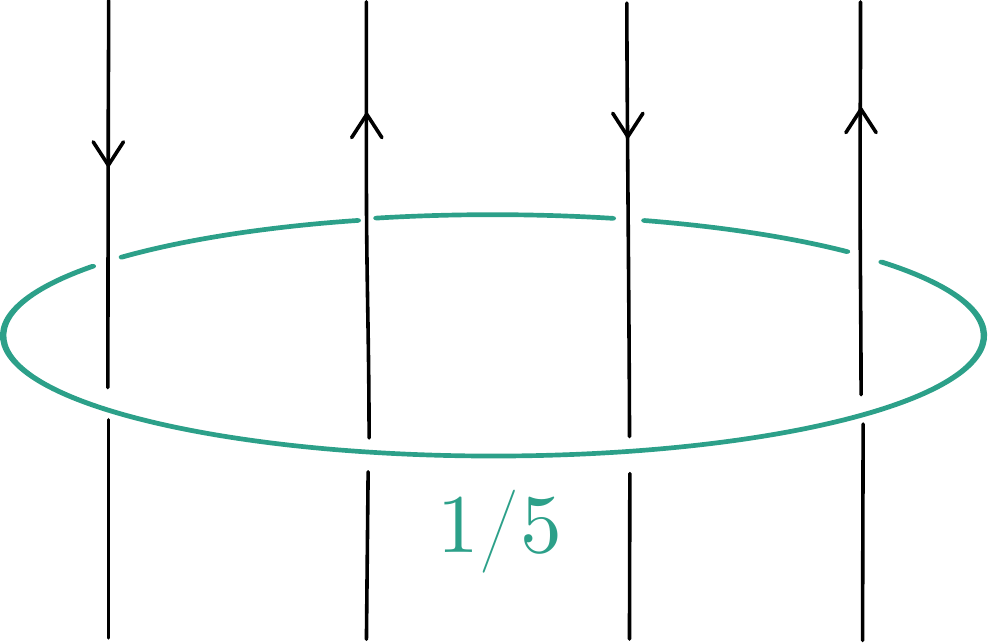}
    \label{fig:tu<=3sd:a}
    }
    \quad
    \subfloat[Subfigure 2 list of figures text][A slam dunk move]{
    \includegraphics[height=0.15\textheight]{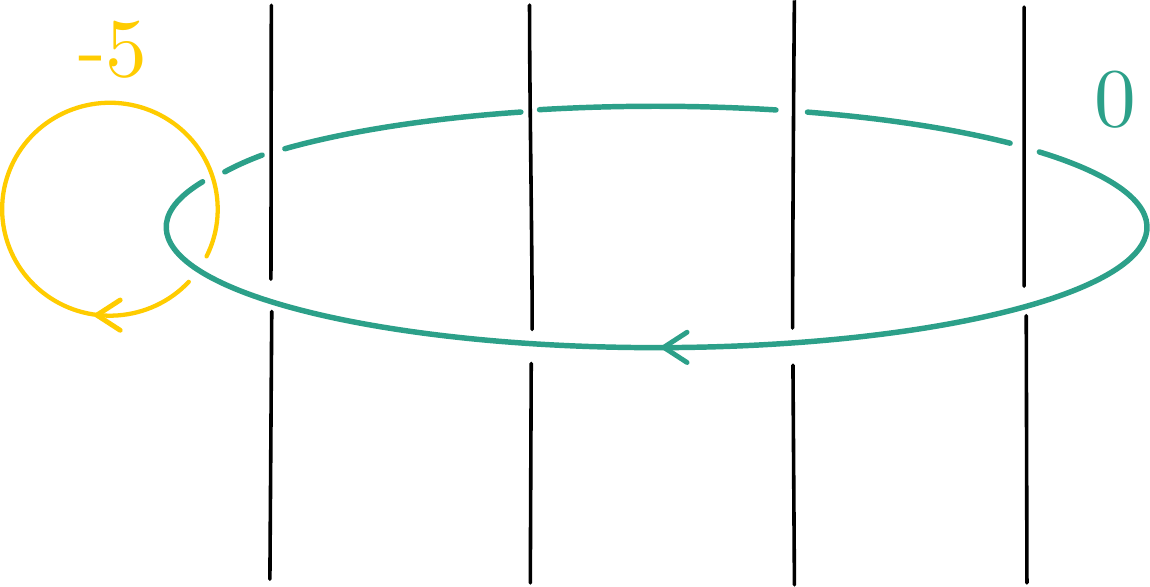}
    \label{fig:tu<=3sd:b}}
    
    \qquad
    \centering
    \subfloat[Subfigure 3 list of figures text][Handle addition]{
    \includegraphics[height=0.15\textheight]{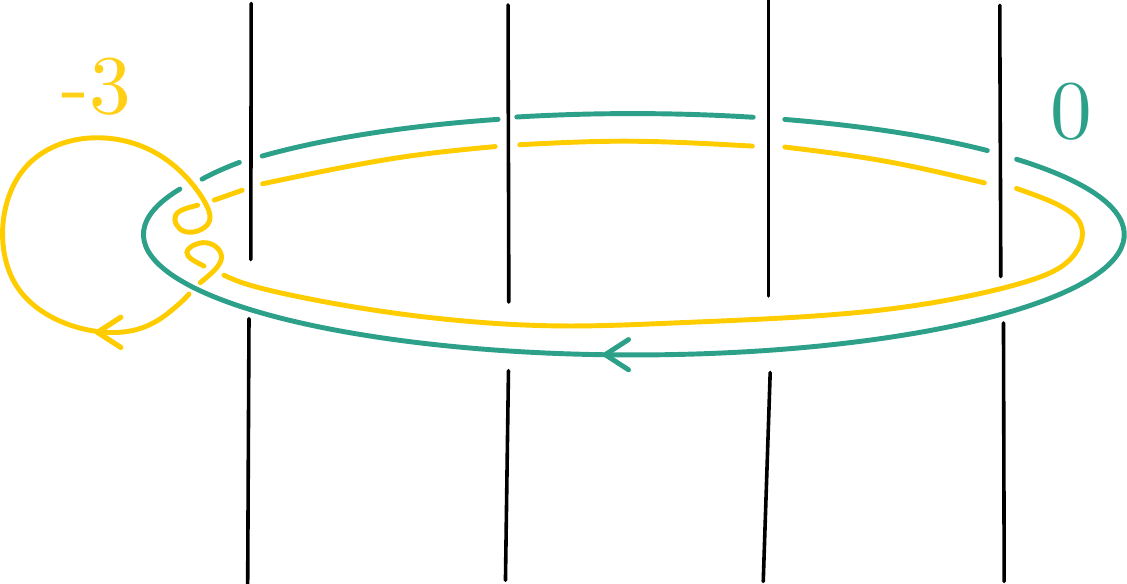}
    \label{fig:tu<=3sd:c}}
    \quad\quad\,\,
    \subfloat[Subfigure 4 list of figures text][Isotopy]{
    \includegraphics[height=0.15\textheight]{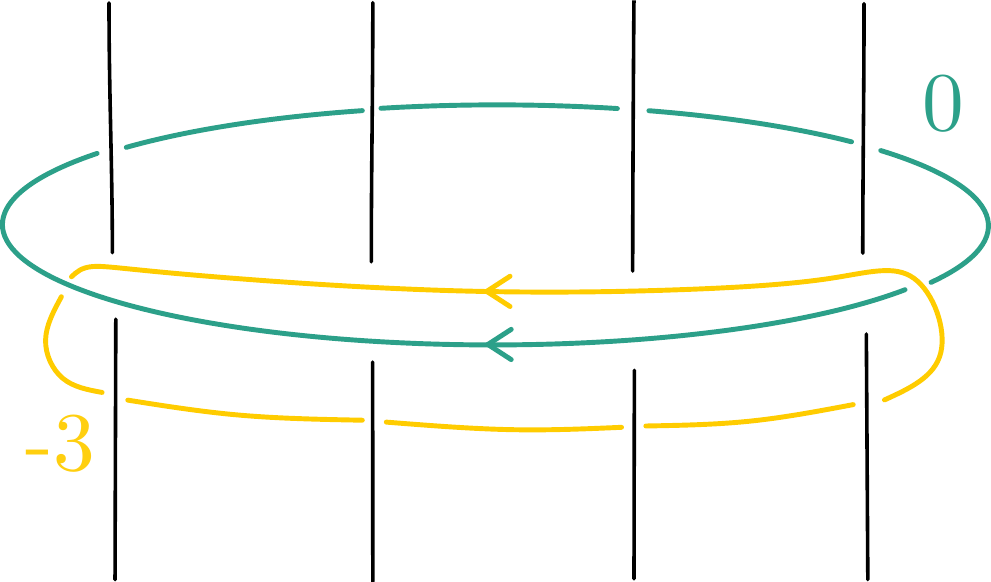}
    \label{fig:tu<=3sd:d}}

    \qquad
    \subfloat[Subfigure 5 list of figures text][Handle addition]{
    \includegraphics[height=0.17\textheight]{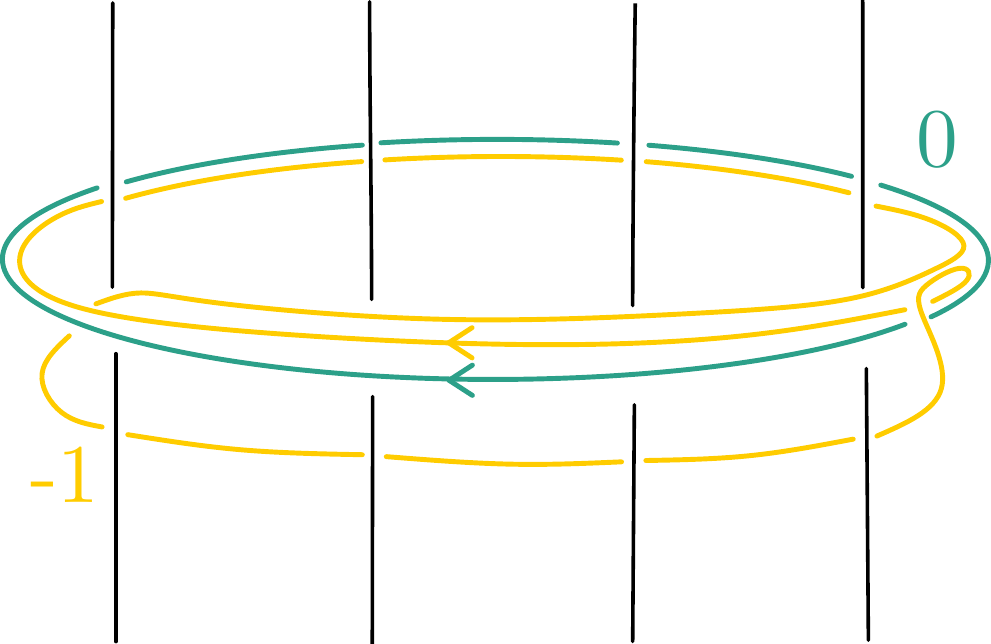}
    \label{fig:tu<=3sd:e}}
    \quad\quad
    \subfloat[Subfigure 6 list of figures text][Isotopy]{
    \includegraphics[height=0.17\textheight]{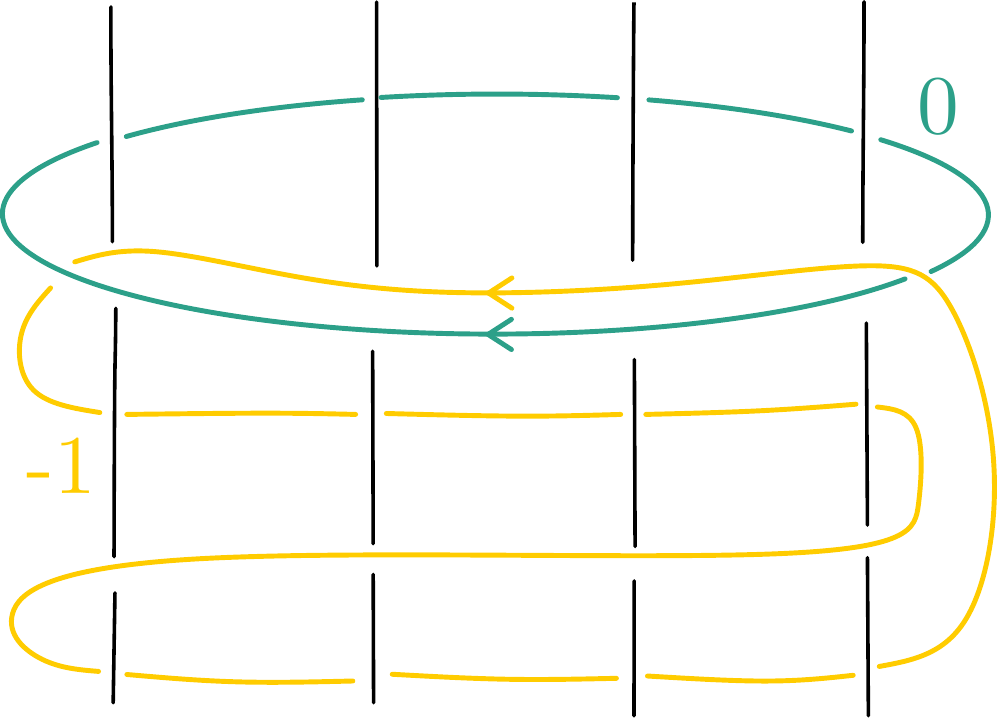}
    \label{fig:tu<=3sd:f}}
    
    \qquad
    \subfloat[Subfigure 7 list of figures text][Handle addition]{
    \includegraphics[height=0.2\textheight]{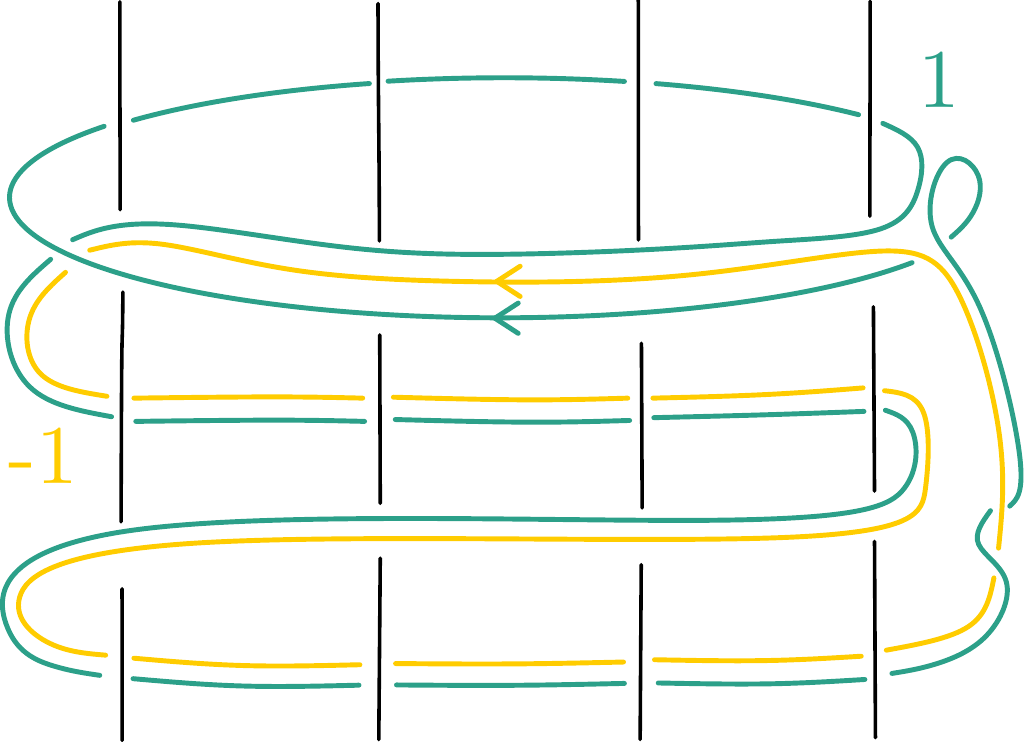}
    \label{fig:tu<=3sd:g}}
    \quad\quad
    \subfloat[Subfigure 8 list of figures text][Isotopy]{
    \includegraphics[height=0.2\textheight]{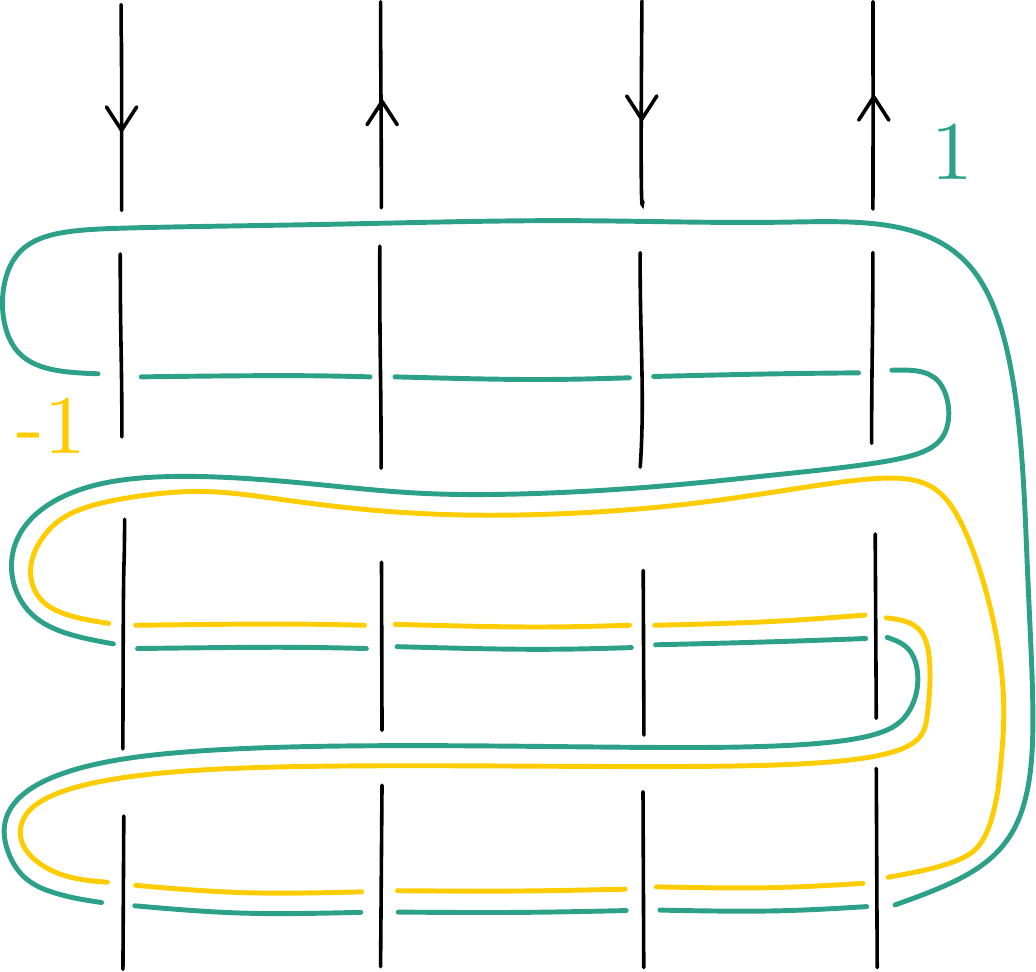}
    \label{fig:tu<=3sd:h}}
    \caption{A sequence of Kirby moves which shows that applying 5 parallel null-homologous twists can be obtained by two null-homologous twists}
    \label{fig:tu<=3sd}

\end{figure}
\newpage
\begin{figure}[ht]
    \centering

    \subfloat[Subfigure 1 list of figures text][\centering Null-homologous unknots in knot complement]  {\includegraphics[width=0.43 \textwidth]{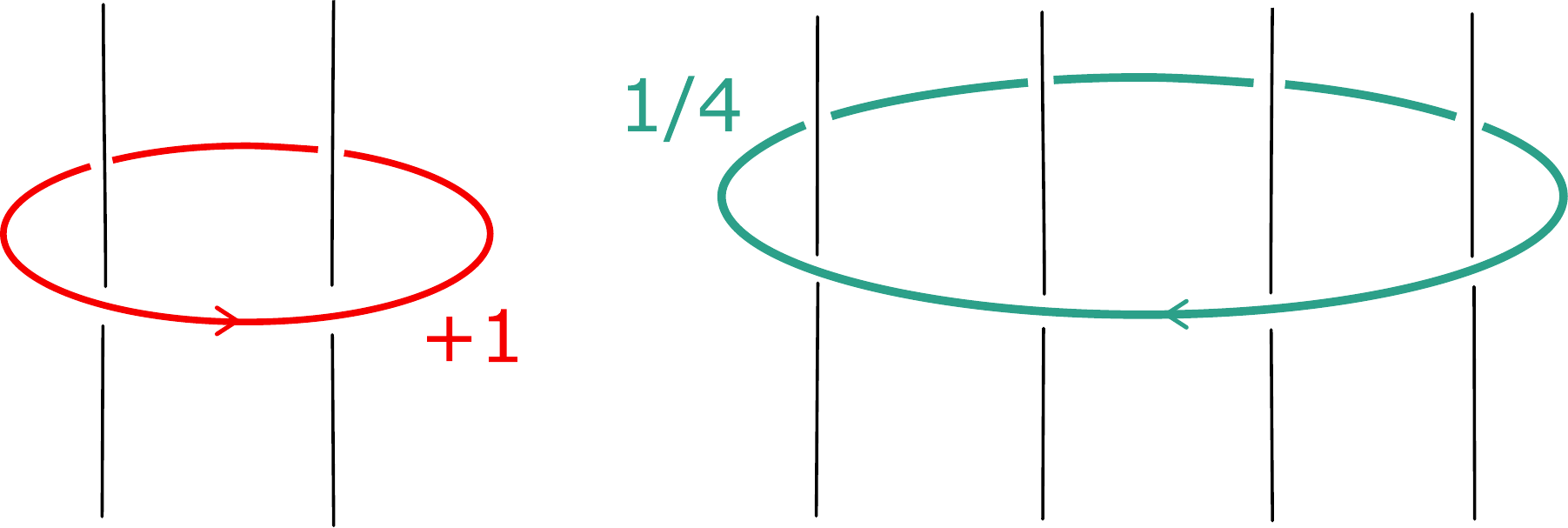}
    \label{fig:even:a}
    }
    \quad
    \subfloat[Subfigure 2 list of figures text][A slam dunk move]{
    \includegraphics[width=0.43\textwidth]{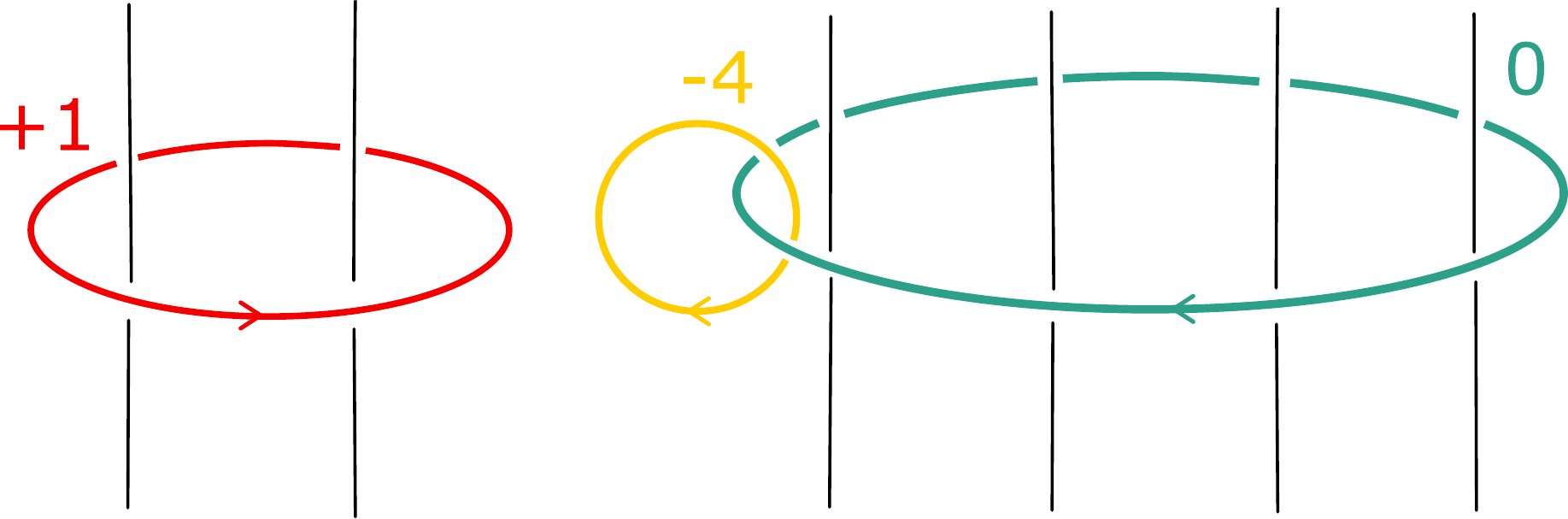}
    \label{fig:even:b}}
    
    \qquad
    \centering
    \subfloat[Subfigure 3 list of figures text][Handle addition]{
     \includegraphics[width=0.43\textwidth]{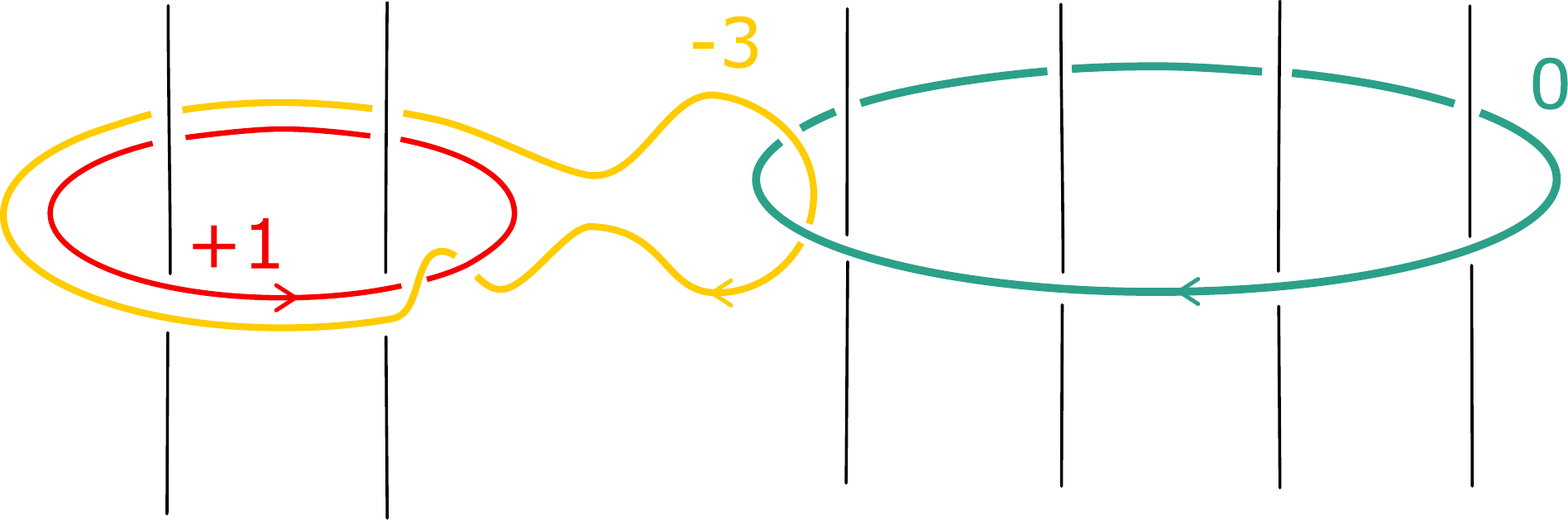}
    \label{fig:even:c}}
    \quad\quad\,\,
    \subfloat[Subfigure 4 list of figures text][Handle addition]{
     \includegraphics[width=0.43\textwidth]{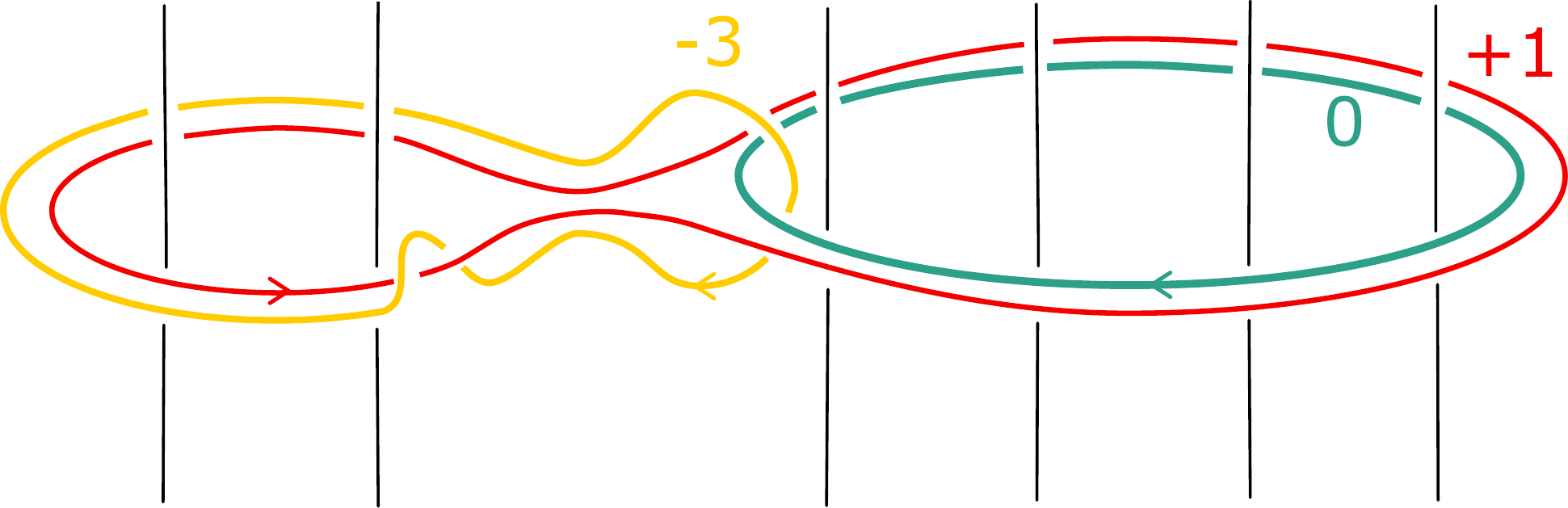}
    \label{fig:even:d}}

    \qquad
    \subfloat[Subfigure 5 list of figures text][Isotopy]{
     \includegraphics[width=0.43\textwidth]{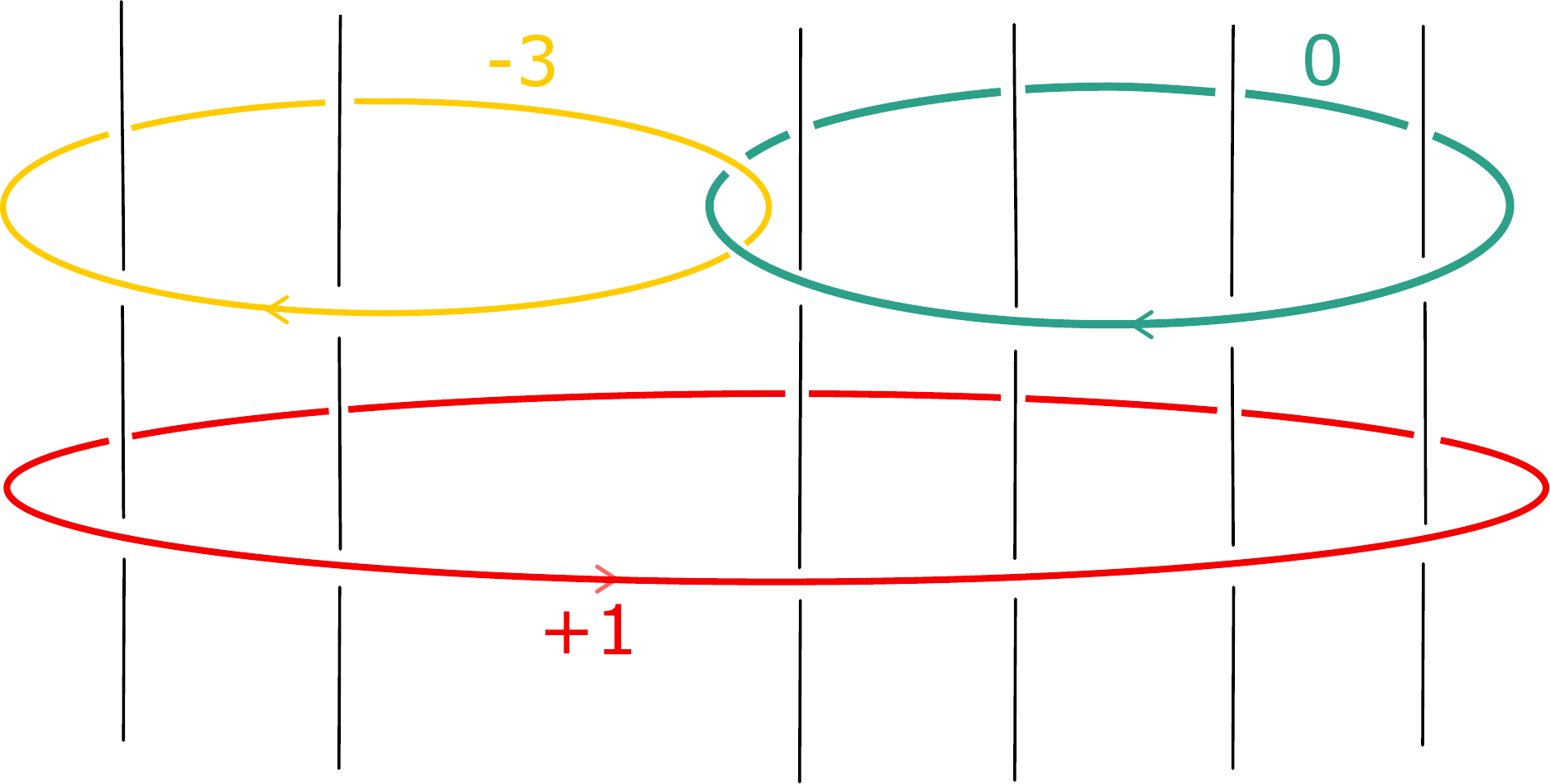}
    \label{fig:even:e}}
    \caption{A sequence of Kirby moves to replace $+1$-- and $+1/4$--framed null-homologous unknots in the knot complement with an unlinked $+1$--framed component and two components linked once, one with framing $0$.}
    \label{fig:even}

\end{figure}

\newpage
\printbibliography

@book {Kaw-Survey-Knot,
    AUTHOR = {Kawauchi, Akio},
     TITLE = {A survey of knot theory},
      NOTE = {Translated and revised from the 1990 Japanese original by the
              author},
 PUBLISHER = {Birkh\"{a}user Verlag, Basel},
      YEAR = {1996},
     PAGES = {xxii+420},
      ISBN = {3-7643-5124-1},
   MRCLASS = {57M25},
  MRNUMBER = {1417494},
MRREVIEWER = {Sergei K. Lando},
}

@article {FMPC-Satelliteknots,
    AUTHOR = {Feller, Peter and Miller, Allison N. and Pinz\'{o}n-Caicedo,
              Juanita},
     TITLE = {The topological slice genus of satellite knots},
   JOURNAL = {Algebr. Geom. Topol.},
  FJOURNAL = {Algebraic \& Geometric Topology},
    VOLUME = {22},
      YEAR = {2022},
    NUMBER = {2},
     PAGES = {709--738},
      ISSN = {1472-2747},
   MRCLASS = {57N70},
  MRNUMBER = {4464463},
       DOI = {10.2140/agt.2022.22.709},
       URL = {https://doi.org/10.2140/agt.2022.22.709},
}

@inbook{harper_unknotting_1985,
%   title = {The Unknotting Number of a Classical Knot},
%   booktitle = {Combinatorial {{Methods}} in {{Topology}} and {{Algebraic Geometry}}},
%   author = {Lickorish, W. B. R.},
%   date = {1985},
%   publisher = {{American Mathematical Soc.}},
%   abstract = {This collection marks the recent resurgence of interest in combinatorial methods, resulting from their deep and diverse applications both in topology and algebraic geometry. Nearly thirty mathematicians met at the University of Rochester in 1982 to survey several of the areas where combinatorial methods are proving especially fruitful: topology and combinatorial group theory, knot theory, 3-manifolds, homotopy theory and infinite dimensional topology, and four manifolds and algebraic surfaces. This material is accessible to advanced graduate students with a general course in algebraic topology along with some work in combinatorial group theory and geometric topology, as well as to established mathematicians with interests in these areas. For both student and professional mathematicians, the book provides practical suggestions for research directions still to be explored, as well as the aesthetic pleasures of seeing the interplay between algebra and topology which is characteristic of this field. In several areas the book contains the first general exposition published on the subject. In topology, for example, the editors have included M. Cohen, W. Metzler and K. Sauerman's article on ``Collapses of \$K\textbackslash times I\$ and group presentations'' and Metzler's ``On the Andrews-Curtis-Conjecture and related problems.'' In addition, J. M. Montesino has provided summary articles on both 3- and 4-manifolds.},
%   bookauthor = {Harper, John R. and Mandelbaum, Richard},
%   isbn = {978-0-8218-5039-8},
%   langid = {english},
%   keywords = {Mathematics / Topology}
% }

@book{cromwellKnotsLinks2004a,
  title = {Knots and {{Links}}},
  author = {Cromwell, Peter R.},
  year = {2004},
  month = oct,
  publisher = {{Cambridge University Press}},
  abstract = {Knots and links are studied by mathematicians, and are also finding increasing application in chemistry and biology. Many naturally occurring questions are often simple to state, yet finding the answers may require ideas from the forefront of research. This readable and richly illustrated 2004 book explores selected topics in depth in a way that makes contemporary mathematics accessible to an undergraduate audience. It can be used for upper-division courses, and assumes only knowledge of basic algebra and elementary topology. Together with standard topics, the book explains: polygonal and smooth presentations; the surgery equivalence of surfaces; the behaviour of invariants under factorisation and the satellite construction; the arithmetic of Conway's rational tangles; arc presentations. Alongside the systematic development of the main theory, there are discussion sections that cover historical aspects, motivation, possible extensions, and applications. Many examples and exercises are included to show both the power and limitations of the techniques developed.},
  googlebooks = {djvbTNR2dCwC},
  isbn = {978-0-521-54831-1},
  langid = {english},
  keywords = {Mathematics / Geometry / General,Mathematics / Mathematical Analysis,Mathematics / Topology}
}

@book{lickorishIntroductionKnotTheory1997,
  title = {An {{Introduction}} to {{Knot Theory}}},
  author = {Lickorish, W. B. Raymond},
  year = {1997},
  month = oct,
  publisher = {{Springer Science \& Business Media}},
  abstract = {This account is an introduction to mathematical knot theory, the theory of knots and links of simple closed curves in three-dimensional space. Knots can be studied at many levels and from many points of view. They can be admired as artifacts of the decorative arts and crafts, or viewed as accessible intimations of a geometrical sophistication that may never be attained. The study of knots can be given some motivation in terms of applications in molecular biology or by reference to paral lels in equilibrium statistical mechanics or quantum field theory. Here, however, knot theory is considered as part of geometric topology. Motivation for such a topological study of knots is meant to come from a curiosity to know how the ge ometry of three-dimensional space can be explored by knotting phenomena using precise mathematics. The aim will be to find invariants that distinguish knots, to investigate geometric properties of knots and to see something of the way they interact with more adventurous three-dimensional topology. The book is based on an expanded version of notes for a course for recent graduates in mathematics given at the University of Cambridge; it is intended for others with a similar level of mathematical understanding. In particular, a knowledge of the very basic ideas of the fundamental group and of a simple homology theory is assumed; it is, after all, more important to know about those topics than about the intricacies of knot theory.},
  googlebooks = {PhHhw\_kRvewC},
  isbn = {978-0-387-98254-0},
  langid = {english},
  keywords = {Mathematics / Algebra / Abstract,Mathematics / Geometry / Analytic,Mathematics / Geometry / General,Mathematics / Group Theory,Mathematics / Topology,Science / Physics / Mathematical \& Computational}
}

@incollection{litherlandSignaturesIteratedTorus1979,
  title = {Signatures of Iterated Torus Knots},
  booktitle = {Topology of {{Low-Dimensional Manifolds}}},
  author = {Litherland, R. A.},
  editor = {Fenn, Roger},
  year = {1979},
  volume = {722},
  pages = {71--84},
  publisher = {{Springer Berlin Heidelberg}},
  address = {{Berlin, Heidelberg}},
  doi = {10.1007/BFb0063191},
  isbn = {978-3-540-09506-4}, % 978-3-540-35186-3}

@article {feller_classical_2019,
    AUTHOR = {Feller, Peter and Lewark, Lukas},
     TITLE = {On classical upper bounds for slice genera},
   JOURNAL = {Selecta Math. (N.S.)},
  FJOURNAL = {Selecta Mathematica. New Series},
    VOLUME = {24},
      YEAR = {2018},
    NUMBER = {5},
     PAGES = {4885--4916},
      ISSN = {1022-1824},
   MRCLASS = {57M25 (57M27)},
  MRNUMBER = {3874707},
MRREVIEWER = {Sebastian Baader},
       DOI = {10.1007/s00029-018-0435-x},
       URL = {https://doi.org/10.1007/s00029-018-0435-x},
       eprint = {1611.02679},
    eprinttype = {arxiv},
  primaryclass = {math},
}

@article {livingston2018,
    AUTHOR = {Livingston, Charles},
     TITLE = {Signature functions of knots},
   JOURNAL = {Proc. Amer. Math. Soc.},
  FJOURNAL = {Proceedings of the American Mathematical Society},
    VOLUME = {146},
      YEAR = {2018},
    NUMBER = {10},
     PAGES = {4513--4520},
      ISSN = {0002-9939},
   MRCLASS = {57M25 (57M27)},
  MRNUMBER = {3834675},
MRREVIEWER = {Peter Feller},
       DOI = {10.1090/proc/14102},
       URL = {https://doi.org/10.1090/proc/14102},
}

@article {nakanishi2005,
    AUTHOR = {Nakanishi, Yasutaka},
     TITLE = {A note on unknotting number. {II}},
   JOURNAL = {J. Knot Theory Ramifications},
  FJOURNAL = {Journal of Knot Theory and its Ramifications},
    VOLUME = {14},
      YEAR = {2005},
    NUMBER = {1},
     PAGES = {3--8},
      ISSN = {0218-2165},
   MRCLASS = {57M25},
  MRNUMBER = {2124552},
MRREVIEWER = {G. Burde},
       DOI = {10.1142/S0218216505003701},
       URL = {https://doi.org/10.1142/S0218216505003701},
}

@incollection {mccoy2019nullhomologous,
    AUTHOR = {McCoy, Duncan},
     TITLE = {Null-homologous twisting and the algebraic genus},
 BOOKTITLE = {2019--20 {MATRIX} annals},
    SERIES = {MATRIX Book Ser.},
    VOLUME = {4},
     PAGES = {147--165},
 PUBLISHER = {Springer, Cham},
      YEAR = {2021},
   MRCLASS = {57K10},
  MRNUMBER = {4294766},
MRREVIEWER = {Bruno P. Zimmermann},
}

@article {MR4171377,
    AUTHOR = {Baader, S. and Banfield, I. and Lewark, L.},
     TITLE = {Untwisting 3-strand torus knots},
   JOURNAL = {Bull. Lond. Math. Soc.},
  FJOURNAL = {Bulletin of the London Mathematical Society},
    VOLUME = {52},
      YEAR = {2020},
    NUMBER = {3},
     PAGES = {429--436},
      ISSN = {0024-6093},
   MRCLASS = {57K10},
  MRNUMBER = {4171377},
       DOI = {10.1112/blms.12335},
       URL = {https://doi.org/10.1112/blms.12335},
    eprint = {1909.01003},
archivePrefix={arXiv},
primaryClass={math.GT}
}

@article {mccoy2021gaps,
    AUTHOR = {McCoy, Duncan},
     TITLE = {Gaps between consecutive untwisting numbers},
   JOURNAL = {Glasg. Math. J.},
  FJOURNAL = {Glasgow Mathematical Journal},
    VOLUME = {63},
      YEAR = {2021},
    NUMBER = {1},
     PAGES = {59--65},
      ISSN = {0017-0895},
   MRCLASS = {57K10},
  MRNUMBER = {4190070},
       DOI = {10.1017/S0017089520000014},
       URL = {https://doi.org/10.1017/S0017089520000014},
    eprint = {1908.06447},
archivePrefix={arXiv},
primaryClass={math.GT}
}

@inproceedings{gordonAspectsClassicalKnot1978a,
  title = {Some Aspects of Classical Knot Theory},
  booktitle = {Knot {{Theory}}},
  author = {Gordon, C. M.},
  editor = {Hausmann, Jean-Claude},
  year = {1978},
  series = {Lecture {{Notes}} in {{Mathematics}}},
  pages = {1--60},
  publisher = {{Springer}},
  address = {{Berlin, Heidelberg}},
  doi = {10.1007/BFb0062968},
  isbn = {978-3-540-35705-6},
  langid = {english},
  keywords = {Cyclic Cover,Heegaard Splitting,Homology Sphere,Reidemeister Move,Tubular Neighbourhood},
  file = {/Users/kainc/Zotero/storage/F9MBVDL8/gordsurv.pdf}
}

@article{mathieu_chirurgies_1988-1,
	title = {Chirurgies de {Dehn} de pente $\pm 1$ sur certains n\ae{}uds dans les 3-vari{\'e}t{\'e}s},
	volume = {280},
	issn = {0025-5831, 1432-1807},
	url = {http://link.springer.com.ezproxy.rice.edu/article/10.1007/BF01456339},
	doi = {10.1007/BF01456339},
	number = {3},
	urldate = {2014-09-12},
	journal = {Mathematische Annalen},
	author = {Mathieu, Yves and Domergue, Michel},
	month = apr,
	year = {1988},
	keywords = {Mathematics, general},
	pages = {501--508},
	file = {Mathieu and Domergue - 1988 - Chirurgies de Dehn de pente ±1 sur certains nœuds .pdf:C\:\\Users\\Kenan\\AppData\\Roaming\\Zotero\\Zotero\\Profiles\\xwrlbjes.default\\zotero\\storage\\JIKJ4JP7\\JIKJ4JP7.pdf:application/pdf;Snapshot:C\:\\Users\\Kenan\\AppData\\Roaming\\Zotero\\Zotero\\Profiles\\xwrlbjes.default\\zotero\\storage\\FFKNDMSZ\\BF01456339.html:text/html}
}

@article{kronheimer_gauge_1995,
	title = {Gauge theory for embedded surfaces, {II}},
	volume = {34},
	issn = {0040-9383},
	url = {http://www.sciencedirect.com/science/article/pii/0040938394E00033},
	doi = {10.1016/0040-9383(94)E0003-3},
	number = {1},
	urldate = {2014-08-26},
	journal = {Topology},
	author = {Kronheimer, Peter B. and Mrowka, Tomasz S.},
	month = jan,
	year = {1995},
	pages = {37--97},
	file = {ScienceDirect Full Text PDF:C\:\\Users\\Kenan\\AppData\\Roaming\\Zotero\\Zotero\\Profiles\\xwrlbjes.default\\zotero\\storage\\R4A7QMC2\\Kronheimer and Mrowka - 1995 - Gauge theory for embedded surfaces, II.pdf:application/pdf;ScienceDirect Snapshot:C\:\\Users\\Kenan\\AppData\\Roaming\\Zotero\\Zotero\\Profiles\\xwrlbjes.default\\zotero\\storage\\63N7HFD4\\0040938394E00033.html:text/html}
}

@article{kauffman_signature_1976,
	title = {Signature of links},
	volume = {216},
	issn = {0002-9947, 1088-6850},
	url = {http://www.ams.org/tran/1976-216-00/S0002-9947-1976-0388373-0/},
	doi = {10.1090/S0002-9947-1976-0388373-0},
	abstract = {Let L be an oriented tame link in the three sphere . We study the Murasugi signature, , and the nullity, . It is shown that is a locally flat topological concordance invariant and that is a topological concordance invariant (no local flatness assumption here). Known results about the signature are re-proved (in some cases generalized) using branched coverings.},
	urldate = {2015-08-05},
	journal = {Transactions of the American Mathematical Society},
	author = {Kauffman, Louis H. and Taylor, Laurence R.},
	year = {1976},
	keywords = {branched covering, concordance, isotopy, link, nullity, Signature},
	pages = {351--365},
	file = {Full Text PDF:C\:\\Users\\Kenan\\AppData\\Roaming\\Zotero\\Zotero\\Profiles\\xwrlbjes.default\\zotero\\storage\\VJ9KNRJX\\Kauffman and Taylor - 1976 - Signature of links.pdf:application/pdf}
}

@article{murakami_algebraic_1990,
	title = {Algebraic unknotting operation},
	volume = {8},
	journal = {Proceedings of the Second Soviet-Japan Symposium of Topology},
	author = {Murakami, Hitoshi},
	year = {1990},
	pages = {283--292}
}

@article{kronheimer_gauge_1993,
	title = {Gauge theory for embedded surfaces, {I}},
	volume = {32},
	issn = {0040-9383},
	url = {http://www.sciencedirect.com/science/article/pii/004093839390051V},
	doi = {10.1016/0040-9383(93)90051-V},
	number = {4},
	urldate = {2014-08-26},
	journal = {Topology},
	author = {Kronheimer, Peter B. and Mrowka, Tomasz S.},
	month = oct,
	year = {1993},
	pages = {773--826},
	file = {ScienceDirect Full Text PDF:C\:\\Users\\Kenan\\AppData\\Roaming\\Zotero\\Zotero\\Profiles\\xwrlbjes.default\\zotero\\storage\\M2EZ3CIE\\Kronheimer and Mrowka - 1993 - Gauge theory for embedded surfaces, I.pdf:application/pdf;ScienceDirect Snapshot:C\:\\Users\\Kenan\\AppData\\Roaming\\Zotero\\Zotero\\Profiles\\xwrlbjes.default\\zotero\\storage\\XESWEZDT\\004093839390051V.html:text/html}
}

@article{borodzik_unknotting_2015,
	title = {The unknotting number and classical invariants, {I}},
	volume = {15},
	issn = {1472-2739, 1472-2747},
	url = {http://msp.org/agt/2015/15-1/p04.xhtml},
	doi = {10.2140/agt.2015.15.85},
	number = {1},
	urldate = {2015-05-05},
	journal = {Algebraic \& Geometric Topology},
	author = {Borodzik, Maciej and Friedl, Stefan},
	month = mar,
	year = {2015},
	pages = {85--135},
	file = {agt-v15-n1-p04-p.pdf:C\:\\Users\\Kenan\\AppData\\Roaming\\Zotero\\Zotero\\Profiles\\xwrlbjes.default\\zotero\\storage\\ZIPPMNVX\\agt-v15-n1-p04-p.pdf:application/pdf}
}

@article{livingston_slicing_2002,
	title = {The slicing number of a knot},
	volume = {2},
	url = {http://www.emis.ams.org/journals/UW/agt/ftp/main/2002/agt-2s41.pdf},
	urldate = {2015-05-05},
	journal = {Algebr. Geom. Topol},
	author = {Livingston, Charles},
	year = {2002},
	pages = {1051--1060},
	file = {agt-2-41.pdf:C\:\\Users\\Kenan\\AppData\\Roaming\\Zotero\\Zotero\\Profiles\\xwrlbjes.default\\zotero\\storage\\W3K84ZDD\\agt-2-41.pdf:application/pdf}
}

@article{borodzik_untwisting_2019-1,
  title = {Untwisting number and {{Blanchfield}} pairings},
  author = {Borodzik, Maciej},
  date = {2019-07},
  journaltitle = {Osaka Journal of Mathematics},
  volume = {56},
  number = {3},
  pages = {497--505},
  publisher = {{Osaka University and Osaka City University, Departments of Mathematics}},
  issn = {0030-6126},
  url = {https://projecteuclid.org/journals/osaka-journal-of-mathematics/volume-56/issue-3/Untwisting-number-and-Blanchfield-pairings/ojm/1563242421.full},
  urldate = {2021-09-01},
  abstract = {In this note we use Blanchfield forms to study knots that can be turned into an unknot using a single \$\textbackslash overline\{t\}\_\{2k\}\$ move.},
  keywords = {57M25},
  file = {C\:\\Users\\kince\\Zotero\\storage\\CB2HXH27\\Borodzik_2019_Untwisting number and Blanchfield pairings.pdf;C\:\\Users\\kince\\Zotero\\storage\\X6NDESUT\\1563242421.html}
}

@article{ince_untwisting_2016,
  title = {The untwisting number of a knot},
  author = {\.{I}nce, Kenan},
  date = {2016-06-14},
  journaltitle = {Pacific Journal of Mathematics},
  volume = {283},
  number = {1},
  pages = {139--156},
  issn = {0030-8730, 0030-8730},
  doi = {10.2140/pjm.2016.283.139},
  url = {http://msp.org/pjm/2016/283-1/p08.xhtml},
  urldate = {2017-05-18},
  langid = {english},
  keywords = {Mathematics - Geometric Topology},
  file = {C\:\\Users\\kince\\Zotero\\storage\\2CUZWCX9\\Ince - 2015 - The untwisting number of a knot.pdf;C\:\\Users\\kince\\Zotero\\storage\\74Q3AF5D\\Ince - 2016 - The untwisting number of a knot.pdf;C\:\\Users\\kince\\Zotero\\storage\\QKUATR63\\pjm-v283-n1-p08-s.pdf;C\:\\Users\\kince\\Zotero\\storage\\6CUDGM4V\\pjm-v283-n1-p08-p.html;C\:\\Users\\kince\\Zotero\\storage\\J49XKSWD\\1507.html;C\:\\Users\\kince\\Zotero\\storage\\TZBM36E8\\1507.html}
}

@incollection {livingston_null-homologous_2019-1,
    AUTHOR = {Livingston, Charles},
     TITLE = {Null-homologous unknottings},
 BOOKTITLE = {Topology and geometry---a collection of essays dedicated to
              {V}ladimir {G}. {T}uraev},
    SERIES = {IRMA Lect. Math. Theor. Phys.},
    VOLUME = {33},
     PAGES = {59--68},
 PUBLISHER = {Eur. Math. Soc., Z\"{u}rich},
      YEAR = {2021},
   MRCLASS = {57K10},
  MRNUMBER = {4394501},
       DOI = {10.4171/IRMA/33-1/3},
       URL = {https://doi.org/10.4171/IRMA/33-1/3},
}

@misc{manolescu_zero_2021,
    title={From zero surgeries to candidates for exotic definite four-manifolds},
    author={Ciprian Manolescu and Lisa Piccirillo},
    year={2021},
    eprint={2102.04391},
    archivePrefix={arXiv},
    primaryClass={math.GT}
}

@article{inceUntwistingInformationHeegaard2017a,
  title = {Untwisting information from {{Heegaard Floer}} homology},
  author = {\.{I}nce, Kenan},
  year = {2017},
  month = aug,
  volume = {17},
  pages = {2283--2306},
  issn = {1472-2739, 1472-2747},
  doi = {10.2140/agt.2017.17.2283},
  abstract = {The unknotting number of a knot is the minimum number of crossings one must change to turn that knot into the unknot. We work with a generalization of unknotting number due to Mathieu-Domergue, which we call the untwisting number. The p-untwisting number is the minimum number (over all diagrams of a knot) of full twists on at most 2p strands of a knot, with half of the strands oriented in each direction, necessary to transform that knot into the unknot. In previous work, we showed that the unknotting and untwisting numbers can be arbitrarily different. In this paper, we show that a common route for obstructing low unknotting number, the Montesinos trick, does not generalize to the untwisting number. However, we use a different approach to get conditions on the Heegaard Floer correction terms of the branched double cover of a knot with untwisting number one. This allows us to obstruct several 10 and 11-crossing knots from being unknotted by a single positive or negative twist. We also use the Ozsv\textbackslash 'ath-Szab\textbackslash 'o tau invariant and the Rasmussen s invariant to differentiate between the p- and q-untwisting numbers for certain p,q {$>$} 1.},
  archiveprefix = {arXiv},
  eprint = {1604.03033},
  journal = {Algebr. Geom. Topol.},
  keywords = {57M27; 57M25,Mathematics - Geometric Topology},
  number = {4}
}

@misc{kjuchukova2021slicing,
      title={Slicing knots in definite 4-manifolds}, 
      author={Alexandra Kjuchukova and Allison N. Miller and Arunima Ray and S\"{u}meyra Sakall\i},
      year={2021},
      eprint={2112.14596},
      archivePrefix={arXiv},
      primaryClass={math.GT}
}

@article {freedman1982,
    AUTHOR = {Freedman, Michael Hartley},
     TITLE = {The topology of four-dimensional manifolds},
   JOURNAL = {J. Differential Geometry},
  FJOURNAL = {Journal of Differential Geometry},
    VOLUME = {17},
      YEAR = {1982},
    NUMBER = {3},
     PAGES = {357--453},
      ISSN = {0022-040X},
   MRCLASS = {57N12 (57R80 57R99)},
  MRNUMBER = {679066},
MRREVIEWER = {John J. Walsh},
       URL = {http://projecteuclid.org/euclid.jdg/1214437136},
}

\end{document}